\definecolor{marine}{HTML}{2c61b5}
\Crefname{EXA}{Example}{Examples}
\theoremstyle{plain}
\newtheorem{THM}{Theorem}[section]
\newtheorem{PROP}[THM]{Proposition}
\newtheorem{COR}[THM]{Corollary}
\theoremstyle{definition}
\newtheorem{DEF}[THM]{Definition}
\newtheorem{RMK}[THM]{Remark}
\newtheorem{EXA}[THM]{Example}
\DeclareMathOperator{\id}{id}
\DeclareMathOperator{\Map}{Map}
\DeclareMathOperator{\PMap}{PMap}
\DeclareMathOperator{\Homeo}{Homeo}
\DeclareMathOperator{\Int}{Int}
\newcommand{\cA}{\mathcal{A}}
\newcommand{\cM}{\mathcal{M}}
\newcommand{\cP}{\mathcal{P}}
\newcommand{\cV}{\mathcal{V}}
\newcommand{\N}{\mathbb{N}}
\newcommand{\Z}{\mathbb{Z}}
\title[Coarsely bounded generating sets for big mapping class groups]{Coarsely bounded generating sets for mapping class groups of infinite-type surfaces}
\author{Thomas Hill, Sanghoon Kwak, and Rebecca Rechkin}
\date{}
\begin{document}

\maketitle
\begin{abstract}
Mann and Rafi's seminal work \cite{mann2023large} initiated the study of the coarse geometry of big mapping class groups. 
Specifically, they construct coarsely bounded (CB) generating sets for mapping class groups
of a large class of infinite-type surfaces.  In this expository note, we illustrate examples of surfaces whose mapping class groups admit such generating sets, as well as those that do not, with the goal of exploring the context of Mann--Rafi's hypotheses.  
\end{abstract}

\section{Introduction}

Interest in big mapping class groups has grown significantly in recent years.  
Mann--Rafi's work \cite{mann2023large}, based on Rosendal's framework \cite{rosendal2021coarse} of \emph{coarsely bounded} (CB) sets, takes the first steps in understanding the large-scale geometry of big mapping class groups, primarily for surfaces with tame end spaces.
In the specific scenario when a surface has a unique maximal end Jiménez-Rolland--Morales \cite{rolland2023large} provide a complete CB classification without tameness assumptions. The first author of this paper modified \cite{mann2023large} to establish a full CB classification of pure mapping class groups of infinite-type surfaces, similarly without tameness assumptions \cite{hill2023largescale}.  Furthermore, the influence of Mann and Rafi’s approach extends beyond big mapping class groups, offering a model for studying the large-scale geometry of other ``big'' groups as well; see, for instance, \cite{Domat_2023, domat2023generatingsetsalgebraicproperties, branman2024graphical, udall2024spherecomplexlocallyfinite}.

This expository paper aims to further illustrate the results of Mann and Rafi by offering explicit examples of CB generating sets for specific surfaces.
In \Cref{sec:prelim}, we expand upon their descriptions and examples and provide definitions related to big mapping class groups,
coarse geometry of topological groups.
\Cref{sec:CBgenset} describes the CB-generating set for $\Map(S)$
constructed by Mann--Rafi(\Cref{thm:CBgen}) and discusses why this set is indeed CB. Finally, in
\Cref{sec:examples}, we consider various examples of this CB generating set, and build non-examples with the aim of illustrating the need for Mann--Rafi's various hypotheses.  

\section*{Acknowledgments}
We thank Priyam Patel for a careful reading and useful discussions. We also thank Brian Udall for pointing out an error in \Cref{exa:nontame_CB} and suggesting a fix, and Chaitanya Tappu for helpful suggestions to improve \Cref{partialorder}.  We thank the anonymous referees for helpful comments that greatly improved this article. 

The first and third authors acknowledge support from RTG DMS--1840190 and NSF DMS--2046889.
The second author was partially supported by the New Faculty Startup Fund from Seoul National University, a KIAS Individual Grant (HP098501) via the June E Huh Center for Mathematical Challenges at Korea Institute for Advanced Study, and NSF DMS--2304774.

\section{Preliminary}\label{sec:prelim}

\subsection{Infinite-type surfaces}

A surface is a 2-manifold that is second countable, connected, orientable, and has a compact (possibly empty) boundary. In this article, we consider only surfaces without boundary. A surface is said to be of \textbf{finite type} if its fundamental group is finitely generated; otherwise, it is of \textbf{infinite type}.
Connected finite-type surfaces are classified up to homeomorphism by three integers: the number of boundary components $b$, the genus $g$, and the number of punctures $n$.
In contrast, these three invariants are not sufficient to topologically classify infinite-type surfaces. For infinite type surfaces, one also needs to consider the surface’s end space, which can be thought of intuitively as the different ways of ``going to infinity'' on the surface. However, defined this way, the structure of the end space is not always apparent from a particular drawing of the surface; for example see \Cref{fig:lochandlad}.  
Formally, the end space is defined as follows. 

\begin{figure}[ht!]
    \centering
    \includegraphics[width=.7\textwidth]{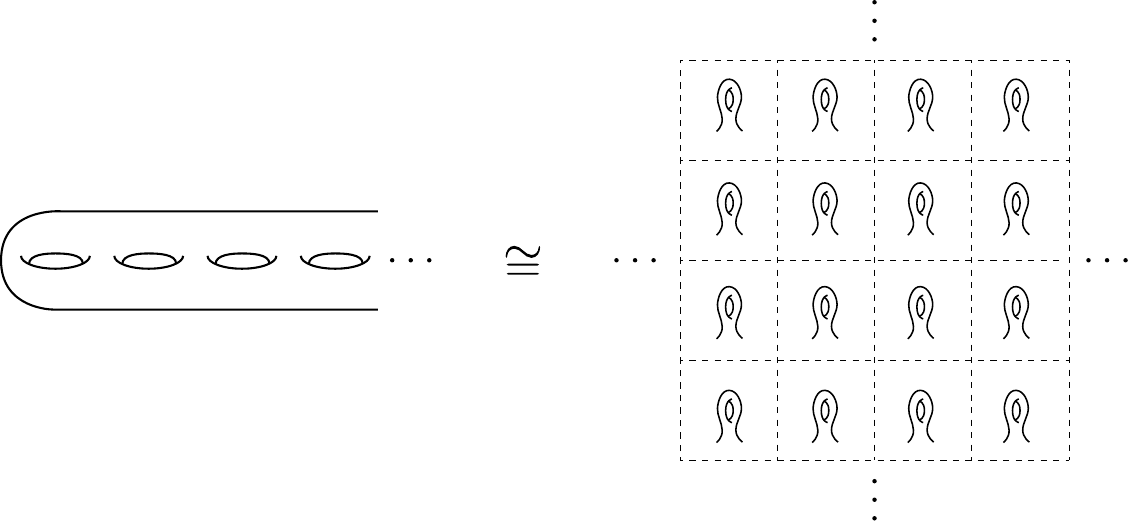}
    \caption{These two surfaces are homeomorphic and are referred to as the Loch Ness monster surface.  On the left, it is clear there is only one way to approach infinity in the surface. However, on the right, the apparently ``different ways'' to move toward infinity are identified in the one-point compactification of $\mathbb{R}^2$.}
    \label{fig:lochandlad}
    \end{figure}

\begin{DEF}

    A \textbf{compact exhaustion} $\{K_i\}_{i \ge 1}$ of a surface $S$ is a
    collection of compact subsurfaces $K_i$ such that $K_i \subset K_{i+1}$ for each $i \ge 1$, and $\bigcup_{i =1}^\infty K_i = S$.

  Fix a compact exhaustion $\{K_i\}_{i=1}^\infty$ of an infinite-type surface $S$.  Using the inclusion-induced maps $\pi_0(S\setminus K_{i+1}) \to \pi_0(S \setminus K_{i})$, the \textbf{end space} of $S$ is the inverse limit $E(S) := \displaystyle \varprojlim\pi_0(S \setminus K_i)$.  When the context is clear, we may simply write $E=E(S)$.  For every surface $S$, the end space $E(S)$ is independent of the choice of compact exhaustion of $S$.   
  
    An \textbf{end} of $S$ is a point in the end space. Ends of $S$ come in two types: those \textbf{accumulated by genus} (also called \textbf{non-planar ends}), and \textbf{planar ends}. Heuristically, an end is accumulated by genus if, as you move toward it, you always encounter genus; otherwise, it is planar.
    The subset of ends accumulated by genus is denoted by $E_G(S)$.
    There are other equivalent definitions of the end space. For a complete discussion, see \cite[Section 2.1]{aramayona2020big}.
\end{DEF}

The genus and end space are used to classify infinite-type surfaces up to homeomorphism as follows. 
\begin{THM}[{\cite{Richards1963OnTC,kerekjarto1923vorlesungen}}]\label{thm:RichardsClassification}
    Let $S$ and $S'$ be two surfaces of infinite type.  Then $S$ is homeomorphic to $S'$ if and only if
    \[
    (g(S), b(S), E(S), E_G(S)) \cong (g(S'), b(S'), E(S'), E_G(S'))
    \]
    By $\cong$ between the 4-tuples we mean that $S$ and $S'$ have the same number of boundary components: $b(S) = b(S')$; the same number of genus $g(S) = g(S')$, (possibly infinite); and there exists a homeomorphism of pairs
    \[
    (E(S), E_G(S)) \xrightarrow{\cong} (E(S'), E_G(S')),
    \]
    meaning there is a homeomorphism $E(S) \to E(S')$ whose restriction to $E_G(S)$ is a homeomorphism $E_G(S) \to E_G(S')$.
    
\end{THM}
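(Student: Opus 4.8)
The forward direction is immediate, so I would dispose of it first: a homeomorphism $\varphi\colon S \to S'$ restricts to a bijection between boundary components and carries genus to genus, and since the notions of end and of being accumulated by genus are defined purely topologically, $\varphi$ induces a homeomorphism of pairs $(E(S), E_G(S)) \to (E(S'), E_G(S'))$. All the work lies in the converse, and that is where I would concentrate.

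For the converse, suppose the four invariants agree, and fix once and for all a homeomorphism of pairs $f\colon (E(S), E_G(S)) \to (E(S'), E_G(S'))$ together with a bijection of boundary components. Since the boundary is compact it is a finite union of circles, which can be collared and matched trivially, so the plan is first to reduce to the closed case (carrying the finitely many boundary annuli along as inert pieces). The engine of the argument is the classification of compact surfaces with boundary: a compact, connected, orientable bordered surface is determined up to homeomorphism by its genus and its number of boundary circles, and any homeomorphism between the boundaries extends across the surfaces once these two invariants match. My plan is a back-and-forth construction of a global homeomorphism along compatible compact exhaustions built from this local statement.

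The key step is to construct nested compact exhaustions $\{K_i\}$ of $S$ and $\{K_i'\}$ of $S'$ that are \emph{adapted} to the end spaces. Concretely, I would arrange that the finite clopen partition of ends induced by the complementary components $\pi_0(S \setminus K_i)$ refines as $i$ grows, separates the points of $E(S)$ in the limit, and is carried by $f$ exactly onto the partition of $E(S')$ coming from $\pi_0(S' \setminus K_i')$; such separating partitions exist because $E(S)$ is compact, separable, and totally disconnected, and they stay synchronized precisely because $f$ is a homeomorphism. Simultaneously I would arrange that a complementary component on the $S$-side accumulates genus exactly when its $f$-image does (guaranteed by $f(E_G(S)) = E_G(S')$) and that the genus trapped in each difference region $\overline{K_{i+1} \setminus K_i}$ agrees on the two sides. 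Granting adapted exhaustions, I would build the homeomorphism inductively: given $\varphi_i\colon K_i \to K_i'$ respecting the boundary pattern, each connected piece of the compact region $\overline{K_{i+1} \setminus K_i}$ has genus and boundary-circle count matched to the corresponding $S'$-piece, so the compact classification extends $\varphi_i|_{\partial}$ to $\varphi_{i+1}$ compatibly with the combinatorics dictated by $f$. The union $\varphi = \bigcup_i \varphi_i$ is then a bijection $S \to S'$, and adaptedness makes it continuous, open, and proper, hence a homeomorphism inducing $f$ on ends.

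The hard part is the bookkeeping behind the adapted exhaustions and the synchronization of the two inductions: one must at once (i) refine the clopen partitions of $E(S)$ and $E(S')$ so they separate points in the limit while staying matched by $f$, (ii) funnel the possibly infinite genus toward precisely the ends of $E_G$ on each side in a consistent manner, and (iii) ensure every finite-stage extension respects the attaching combinatorics so the limit map is continuous at each end. The totally disconnected, compact, separable structure of the end space is what makes (i) possible, the finiteness of each difference region is what reduces (iii) to the compact classification, and coordinating all of this through $f$ is the genuine content of the Kerékjártó--Richards theorem.
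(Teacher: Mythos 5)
The paper gives no proof of this statement: it is quoted verbatim as the classical Ker\'ekj\'art\'o--Richards classification and attributed to \cite{Richards1963OnTC,kerekjarto1923vorlesungen}, so there is nothing internal to compare your argument against. Your outline is a correct high-level description of the standard proof, and it matches the strategy in Richards' paper in its essentials: compact exhaustions whose complementary components induce finer and finer clopen partitions of the end space, matched through the homeomorphism of pairs $f$, with the compact classification of orientable bordered surfaces used to extend the homeomorphism stage by stage. One organizational difference worth noting is that Richards does not run a direct back-and-forth between two arbitrary surfaces; instead he proves every surface is homeomorphic to a \emph{canonical model} built from its invariants (a sphere with a closed totally disconnected set removed, realizing $(E, E_G)$ as the ideal boundary, with handles attached accumulating exactly at $E_G$), and then deduces the theorem by composing the two reductions. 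That normalization is what tames the bookkeeping you flag as the hard part --- in particular the redistribution of infinite genus so that each difference region $\overline{K_{i+1}\setminus K_i}$ has finite genus funneled only toward ends of $E_G$, and the existence of exhaustions whose induced partitions separate points of $E$ and stay synchronized under $f$. As written, your proposal asserts rather than establishes the existence of these adapted exhaustions, so it is an accurate road map rather than a complete proof; but since the paper itself treats the result as a black box, that is the appropriate level of detail here, and no step of your plan is wrong. (Two minor remarks: the union $\bigcup_i \varphi_i$ of nested homeomorphisms along exhaustions is automatically a homeomorphism, so no separate properness argument is needed; and since the paper restricts to orientable surfaces with compact --- in fact, in this article, empty --- boundary, the boundary-matching step is vacuous.)
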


The set of ends admits a nice topology that is compatible with the topology on
$S$ (see \cite[Section 2]{aramayona2020big}).
With this topology, the set $E(S)$ of ends  becomes compact, Hausdorff, and totally disconnected, so it follows that $E(S)$ is a closed subset of the Cantor set \cite[Theorem 2.1]{aramayona2020big}. 
The set of ends accumulated by genus $E_G(S)$ forms a closed subset of $E(S)$.

Introduced by Mann--Rafi \cite{mann2023large}, the following gives a partial order between ends of a surface.
\begin{DEF}[{\cite[Definition 4.1]{mann2023large}}] \label{partialorder}
    For an infinite-type surface $S$ and $x,y\in E(S)$, we say $y\preccurlyeq
    x$,  if and only if for every neighborhood $U\subset S$ of $x$ there exists
    a neighborhood $V \subset S$ of $y$ and there exists $f\in \Homeo(S)$ so that
    $f(V)\subset U$. This defines a preorder on $E(S)$. We say that two ends
    $x,y\in E(S)$ are \textbf{of the same type}, denoted by $x\sim y$, if
    $x\preccurlyeq y$ and $y\preccurlyeq x$. We write $x \prec y$ if $x
    \preccurlyeq y$ and $x \not\sim y$. If $x \not\preccurlyeq y$ and $y
    \not\preccurlyeq x$, then $x$ and $y$ are said to be \textbf{incomparable.}
    Taking $\sim$ as an equivalence relation on $E(S)$, for $x \in E(S)$ we
    denote by $E(x)$ the equivalence class of $x$. Then $\preccurlyeq$ defines a
    \textbf{partial order} on the set of equivalence classes of $E(S)$. We will
    simply denote by $(E,\preccurlyeq)$ the partially ordered set.
\end{DEF} 
 We have found it helpful
    to think of $y \preccurlyeq x$ as ``$y$ is at most as complicated as
    $x$.'' The following result shows that every surface admits an end of maximal complexity.

\begin{PROP}[{\cite[Proposition 4.7]{mann2023large}}]
For an end space $E$ of an infinite type surface, the partially ordered set $(E, \preccurlyeq)$ admits a maximal element.
\end{PROP}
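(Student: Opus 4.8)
The plan is to apply Zorn's lemma to the partially ordered set $(E,\preccurlyeq)$ of $\sim$-classes. This poset is nonempty, since an infinite-type surface has at least one end, so the empty chain already has an upper bound and the entire content is to verify that every \emph{nonempty} chain has an upper bound. Given such a chain $C$, I would first choose a representative end $x_c\in E(S)$ for each class $c\in C$ and regard $(x_c)_{c\in C}$ as a net indexed by the chain itself, directed by $\preccurlyeq$ (a totally ordered set is directed).

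The first key input is compactness of $E(S)$: every net in a compact space has a cluster point, so there is $x^*\in E(S)$ such that for each neighborhood $U$ of $x^*$ and each $c_0\in C$ there exists $c\in C$ with $c_0\preccurlyeq c$ and $x_c\in U$. The claim is then that the class $[x^*]$ is an upper bound for $C$, i.e. that $x_\beta\preccurlyeq x^*$ for every $\beta\in C$. The second key input is total disconnectedness: since $E(S)$ is compact, Hausdorff, and totally disconnected, $x^*$ admits a neighborhood basis of clopen sets. To establish $x_\beta\preccurlyeq x^*$ I must show that every neighborhood $U$ of $x^*$ contains $f(V)$ for some neighborhood $V$ of $x_\beta$ and some $f\in\Homeo(S)$. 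Shrinking to a clopen neighborhood $U'\subseteq U$ of $x^*$, the cluster-point property with $c_0=\beta$ produces $c\in C$ with $\beta\preccurlyeq c$ and $x_c\in U'$; because $U'$ is open, it is a neighborhood of $x_c$. Since $\beta\preccurlyeq c$ in the chain gives $x_\beta\preccurlyeq x_c$, the definition of $\preccurlyeq$ yields a neighborhood $V$ of $x_\beta$ and $f\in\Homeo(S)$ with $f(V)\subseteq U'\subseteq U$, exactly as required. Hence $[x^*]$ bounds $C$, and Zorn's lemma furnishes a maximal element.

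The main obstacle is precisely the middle step — showing that a cluster point of the chain dominates the entire chain — and it rests on the interplay of the two inputs above: compactness guarantees the cluster point exists, while the clopen neighborhood basis (from total disconnectedness) is what lets me upgrade the mere membership ``$x_c\in U'$'' to ``$U'$ is a neighborhood of $x_c$,'' so that the already-known relation $x_\beta\preccurlyeq x_c$ can be plugged directly into the definition of $\preccurlyeq$ tested against neighborhoods of $x^*$. I would also remark that, since $E(S)$ is metrizable, one could attempt to replace $C$ by a cofinal sequence and argue via sequential compactness; but the net formulation sidesteps any worry about the cofinality of $C$ and handles possibly uncountable chains uniformly, so that is the route I would take.
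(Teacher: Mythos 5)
Your argument is correct: the paper itself states this result without proof, deferring to Mann--Rafi, and your Zorn's-lemma argument --- bounding a chain by a cluster point of a net of representatives, supplied by compactness of $E(S)$, and then feeding the relation $x_\beta \preccurlyeq x_{c}$ into the definition of $\preccurlyeq$ tested on neighborhoods of the cluster point --- is essentially Mann--Rafi's own proof of \cite[Proposition 4.7]{mann2023large}. The only cosmetic remark is that the clopen shrinking is not needed (any open $U \ni x_c$ is already a neighborhood of $x_c$); the genuinely essential point, which you identify correctly, is the tail condition $c \succcurlyeq c_0$ in the cluster-point property.
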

We denote by $\mathcal{M}(E)$ the (nonempty) \textbf{set of maximal elements of $E$}
under $\preccurlyeq$.

\begin{EXA}\label{exa:partialorder}
    Consider the surface $S$ pictured in \Cref{fig:partialorder}.  
    We see $x \preccurlyeq y$ since every neighborhood of $y$ contains a homeomorphic copy of $x$, but the opposite is not true, so we have the strict order $x \prec y$.  The ends $y$ and $y'$ are of the same type: $y \sim y'$.  Similarly, we also have $y \prec z$, so transitively $x \prec z$.  The end $v$ is not comparable with any other ends of the surface $S$ since all other ends are planar.  Hence, the set of maximal ends of $S$ is  $\{v,z\}$.
    
    \begin{figure}[ht!]
    \centering
    \vspace{.5cm}
    \begin{overpic}[width=.7\textwidth]{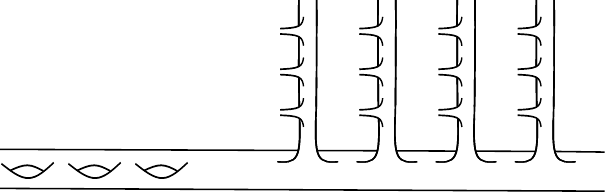}
        \put(-9,3){$v \cdots$}
        \put(42,12){$x\cdot$}
        \put(50,37){$y$}
        \put(63.5,37){$y'$}
        \put(50.2,31){$\vdots$}
        \put(63.5,31){$\vdots$}
        \put(100,2.5){$\cdots z$}
    \end{overpic}

    \caption{A surface $S$ with four end types.  Here, $x \prec y \sim y' \prec z$, and $v$ is incomparable with all other ends of $S$.}
    \label{fig:partialorder}
    
\end{figure} 

\end{EXA}

The set of maximal ends plays an important role in
several classification theorems; for example, see \Cref{thm:locCB}.

\subsection{Coarse geometry of topological groups}  
Recall that the \textbf{mapping class group} of a surface $S$, denoted by $\Map(S)$, is the group of orientation-preserving homeomorphisms of $S$ up to isotopy.   With the compact-open topology, $\Map(S)$ is a topological group.  
Finite-type surfaces have a finitely generated mapping class group by the Dehn--Lickorish theorem \cite{dehn1938gruppe,lickorish1964finite}. The word metric with respect to this generating set gives $\Map(S)$ a well-defined quasi-isometry type; that is, the (possibly distinct) word metrics induced by different finite generating sets are quasi-isometric to one another. 

On the other hand, mapping class groups of infinite-type surfaces are not even compactly generated (\cite[Theorem 4.2]{aramayona2020big}), so their large-scale geometry is more difficult to understand.
Fortunately, since mapping class groups of infinite-type surfaces are Polish (separable and completely metrizable), 
recent work of Rosendal \cite{rosendal2013global,rosendal2014largescalegeometrymetrisable, rosendal2021coarse} extends the framework of geometric group theory to understand these groups by replacing compact subsets with ``coarsely bounded'' (abbreviated CB) sets.  
In particular, Rosendal proves that if a coarsely bounded set generates a Polish group, then the group has a well-defined quasi-isometry type \cite[Proposition 2, Theorem 3]{rosendal2014largescalegeometrymetrisable}.

\begin{DEF}[{\cite[Definition 1.9]{rosendal2013global}}]
\label{def:CB}
    A subset $A$ of a topological group $G$ is globally coarsely bounded in $G$, or simply \textbf{CB}, if for every continuous action of $G$ on a metric space $X$ by isometries, the orbit $A \cdot x$ is bounded for all $x \in X$.  
    A group $G$ is said to be \textbf{locally CB} if it admits a CB neighborhood of the identity, and \textbf{CB-generated} if a CB set generates it.  
\end{DEF}

There are several equivalent definitions of CB. For example, $A \subset G$ is CB if every compatible left-invariant metric on $G$ gives $A$ finite diameter, one given in Mann--Rafi {\cite[Definition 1.1]{mann2023large}}. This definition is equivalent to \Cref{def:CB} by {\cite[Theorem 1.10]{rosendal2013global}}.

\begin{PROP} \label{prop:CBimplications} 
The implications between the CB, locally CB, and CB-generated Polish groups are summarized in the diagram below: 
\begin{center}
    \begin{tikzcd}
        \operatorname{CB} \arrow[rd, Rightarrow,"(3)",swap] \arrow[r, Rightarrow,"(1)"] & \operatorname{CB-generated} \arrow[d, Rightarrow,"(2)"] \\
            & \text{\rm locally CB}        
    \end{tikzcd}
\end{center}
\end{PROP}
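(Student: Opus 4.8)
The implications (1) and (3) are immediate from the definitions, so I would dispose of them first. For (1), if $G$ is CB then $G$ is itself a CB subset of $G$, and $G$ generates $G$; hence $G$ is CB-generated. For (3), the whole group $G$ is an open neighborhood of the identity, and when $G$ is CB this neighborhood is CB, so $G$ is locally CB. (One could instead obtain (3) by composing (1) and (2).)

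The substance of the proposition is (2): a CB-generated Polish group is locally CB. The plan is a Baire category argument. First I would record the standard permanence properties of CB sets, which follow from the metric characterization given after \Cref{def:CB} (see also \cite{rosendal2013global}): finite unions, products, inverses, subsets, and closures of CB sets are again CB. Now let $A$ be a CB set generating $G$. Replacing $A$ with $A \cup A^{-1} \cup \{e\}$ --- still CB by the permanence properties, and still generating --- I may assume $A$ is symmetric and contains the identity. Then each power $A^n$ is CB, so each closure $\overline{A^n}$ is CB, and since $A$ generates $G$ we have $G = \bigcup_{n \ge 1} A^n = \bigcup_{n \ge 1} \overline{A^n}$. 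Because $G$ is Polish it is a Baire space, so some $\overline{A^N}$ has nonempty interior; fix a nonempty open set $U \subseteq \overline{A^N}$.

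It remains to move this open set to the identity. Choose any $g \in U$. Since $G = \bigcup_m A^m$, we have $g \in A^m$ for some $m$, and then $g^{-1} \in A^m$ by symmetry. Thus $V := g^{-1} U$ is an open neighborhood of the identity satisfying
\[
V = g^{-1} U \subseteq A^m \cdot \overline{A^N},
\]
which is a product of two CB sets, hence CB; as a subset of a CB set, $V$ is itself CB. Therefore $V$ is a CB neighborhood of the identity and $G$ is locally CB. I expect this last step to be the main obstacle conceptually: the Baire category theorem only produces an open CB set \emph{somewhere} in $G$, whereas local CB requires one \emph{at the identity}. The translation trick resolves this, and it is legitimate precisely because every element of $G$ --- in particular $g^{-1}$ --- already lies in some CB power $A^m$, so left-translating by $g^{-1}$ keeps us inside the class of CB sets.
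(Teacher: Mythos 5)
Your proofs of (1) and (3) coincide with the paper's (take the whole group as the generating set, resp.\ as the identity neighborhood). The difference is in (2): the paper simply cites this as a theorem of Rosendal for Polish groups \cite[Theorem 1.2]{rosendal2021coarse}, whereas you supply a proof. Your argument is correct and is, in substance, Rosendal's own: the coarsely bounded subsets of a topological group form an ideal stable under finite unions, subsets, inverses, products, and topological closure (for products, note that if $G$ acts by isometries on $X$ then $d(abx,x)\le d(abx,ax)+d(ax,x)=d(bx,x)+d(ax,x)$, so the bound is uniform); writing $G=\bigcup_n \overline{A^n}$ for a symmetric CB generating set $A\ni e$ and applying the Baire category theorem in the Polish group $G$ yields a nonempty open $U\subseteq\overline{A^N}$, and translating by $g^{-1}$ for $g\in U\subseteq\bigcup_m A^m$ produces a CB identity neighborhood $g^{-1}U\subseteq A^m\overline{A^N}$. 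You correctly identify the only subtle point, namely relocating the open set to the identity, and your translation trick handles it. What your route buys is self-containedness: the reader sees exactly where Polishness (Baire category plus the countable union of closures) enters, which the paper's citation hides; what the paper's route buys is brevity and an accurate attribution of the result, which you should still include if you keep the full argument.
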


\begin{proof} 
When a group is CB, it is CB-generated since the whole group can be taken as the CB-generating set, so implication (1) holds.
Implication (2) is a result of Rosendal for Polish groups \cite[Theorem 1.2]{rosendal2021coarse}. 
Of course, combining (1) and (2) results in implication (3), but (3) can also be seen directly.  If a group is globally CB, then it is locally CB since the whole
group can be taken as the CB neighborhood of the identity. 
\end{proof}

\subsection{Mann--Rafi's CB classification} 
In this section, we summarize the results of Mann--Rafi \cite{mann2023large} classifying surfaces whose mapping class groups are CB, locally CB, or CB-generated. 

We first define the two obstructions to $S$ having a CB-generated mapping class group: \emph{limit type}, and \emph{infinite rank}.
\begin{DEF}[{\cite[Definition 6.2]{mann2023large}}]
    We say that the end space $E$ is of \textbf{limit type} if 
    \begin{itemize}
        \item[(a)] there is a finite index subgroup $G$ of $\Map(S)$;
        \item[(b)] a $G$-invariant set $X \subset E$;
        \item[(c)] pairwise nonequivalent ends $z_n \in E$, indexed by $n \in \N$;
        \item[(d)] and a family of nested neighborhoods $\{U_n\}$
    \end{itemize} such that 
    \begin{align*}
    &\bigcap_n U_n = X, \qquad 
    E(z_n) \cap U_n \neq \emptyset, \qquad E(z_n) \cap U_0^c \neq \emptyset, \qquad \text{and}\\
    &E(z_n) \subset (U_n \cup (E-U_0)).
    \end{align*}
    This definition is illustrated in \Cref{fig:limit_type}.
\end{DEF}

\begin{figure}[ht!]
    \centering
    \includegraphics[width=.7\textwidth]{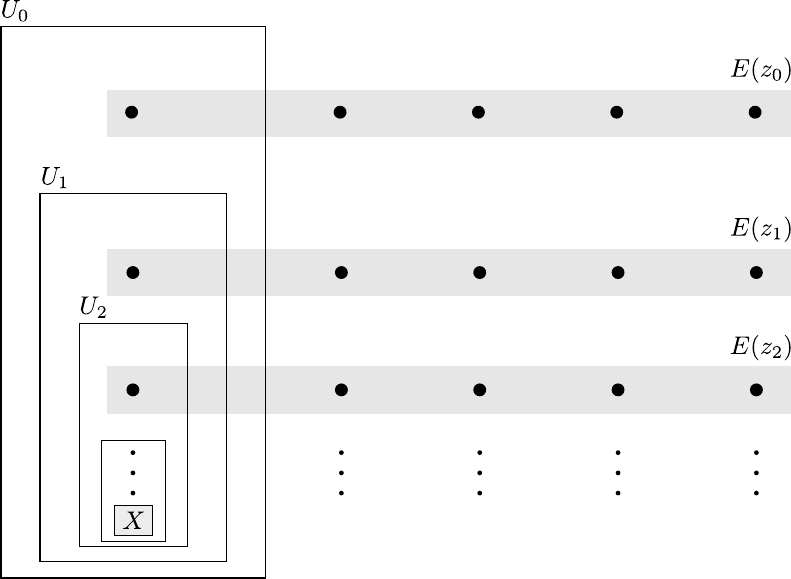}
    \caption{Illustration of an end space of limit type. Notice the elements of $E(z_n)$ overlaps $U_n$, but are not entirely contained in $U_n$, while $E(z_n) \cap U_n$'s limit into $X$.}
    \label{fig:limit_type}
\end{figure}

\begin{DEF}[{\cite[Definition 6.5]{mann2023large}}]\label{def:infiniterank}
    We say $\Map(S)$ has \textbf{infinite rank} if 
    \begin{enumerate}[label=(\arabic*)]
        \item there is a finite index subgroup $G$ of $\Map(S)$,
        \item  a closed $G$-invariant set $X \subset E$,
        \item a neighborhood $U$ of $X$ and pairwise nonequivalent stable ends $\{z_n\}_{n \in \N}$ such that
         \begin{enumerate}[label=(\alph*)]
        \item For each $n \in \N$, $E(z_n)$ is countably infinite and has at least one accumulation point in both $X$ and in $E - U$, and
        \item the set of accumulation points of $E(z_n)$ in $U$ is a subset of $X$.
        \end{enumerate} 
    \end{enumerate} 
   We illustrate this definition in \Cref{fig:infinite_rank}.  
     We say $\Map(S)$ has \textbf{finite rank} if it is not infinite rank.
\end{DEF}
 In both definitions, we often take $X$ as a finite set, and $G$ as a point-wise stabilizer of $X$ in $\Map(S)$.

\begin{figure}[ht!]
    \centering
    \includegraphics[width=.8\textwidth]{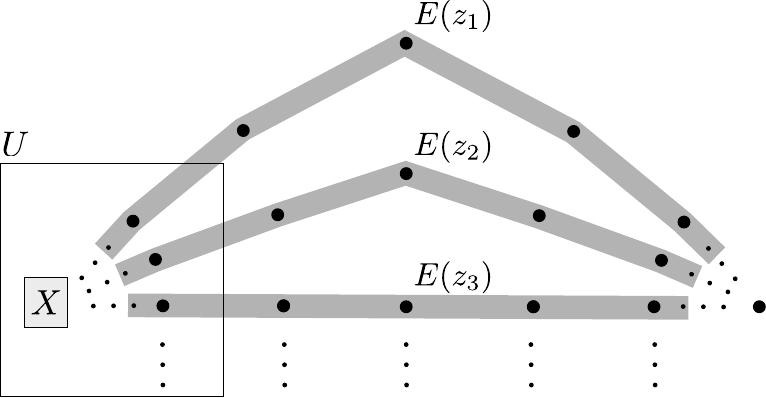}
    \caption{Illustration of an end space of a surface with infinite rank mapping class group. Notice $E(z_n)$ has at least two accumulation points, one of which is contained in $U$ and another of which is not.}
    \label{fig:infinite_rank}
\end{figure}

For surfaces with a \emph{tame} end space or surfaces with countable end
  space, Mann--Rafi completely classify when the mapping class group is CB as well as when it is CB-generated.  The partial order described above and the notion of \emph{stable ends} play a role in the definition of tameness.  We recall these definitions below.

\begin{DEF}[{\cite[Definition 4.14]{mann2023large}}]
    Let $x\in E(S)$ and let $U \subset S$ be a neighborhood of $x$.  The neighborhood $U$ is \textbf{stable} if and only if for any smaller neighborhood $U'\subset U$ of $x$, there is a homeomorphic copy $U''$ of $U$ contained in $U'$.  An end $x$ is called \textbf{stable} if it has a stable neighborhood.
\end{DEF}

Put another way, stable ends are ``locally self-similar''.

\begin{DEF}[{\cite[Definition 6.11]{mann2023large}}]
    The end space $E(S)$ (or the surface $S$) is \textbf{tame} if and only if the following two hold:
    \begin{enumerate}
        \item every maximal (with respect to the partial order in \Cref{partialorder}) end is stable and 
        \item every immediate predecessor to a maximal end is also stable.
    \end{enumerate}
\end{DEF}
We remark their description of maximal ends of $W_{A,B}$ in \cite[Definition 6.11]{mann2023large} is equivalent to the immediate predecessors of (possibly equal) maximal ends $A,B$.

Many standard examples of infinite-type surfaces have tame end spaces. For instance, every end of the Loch Ness monster surface, the Ladder surface, and the (blooming) Cantor tree surface is both maximal and stable; thus, each of these surfaces is tame. The bi-infinite flute surface is a slightly more nuanced example: its countable collection of isolated punctures are immediate predecessors of two maximal ends, which are those accumulated by punctures. Since both types of ends are stable, the bi-infinite flute surface is also tame.
Constructing a surface with a non-tame end space requires more creativity.  Mann and Rafi give one such example, which we now describe.

\begin{EXA}[Non-tame end space; {\cite[Example 6.13]{mann2023large}, \cite[Countable nonplanar case]{mann2022results}}] \label{exa:nontame_basic}

Recall that when a surface has a countable non-empty end space, then there is a countable ordinal $\alpha$ for which the end space is homeomorphic to the ordinal $\omega^\alpha n + 1$, where $\omega$ is the first infinite ordinal, and $n$ is a natural number \cite{mazurkiewicz1920contribution}. Then any end $x$ is locally homeomorphic to  $\omega^\beta + 1$, where $\beta$ is the Cantor--Bendixon rank of $x$.
 
 Let $S$ be a surface with countable end space $D = E(S)$ constructed as follows.  
 Let $D$ be a countable collection of ends such that

 \begin{equation*}
     D \cong \bigsqcup_{n \in \mathbb{N}} \left(\omega^n + 1, \{\omega^n\}\right)
 \end{equation*}
 where, by the pair $\left(\omega^n + 1, \{\omega^n\}\right)$, we mean the total end space of some subsurface $S'$ is homeomorphic to $\omega^n + 1$, and the unique maximal point is accumulated by genus. Let $z_n$ denote this maximal end accumulated by genus.  
 As $D$ is a clopen subset of a Cantor set, it is compact.  Therefore, each $\left(\omega^n + 1, \{\omega^n\}\right)$ in $D$ lies in a bounded set.  We arrange these bounded sets to Hausdorff converge to a single point, say $z$ (see \Cref{fig:nontame_nonlocCB}). Note that $z$ is a maximal end.
 
  The $\{ z_n \}_{n \ge 1}$ are pairwise incomparable because each neighborhood of $z_n$
  is homeomorphic to $(\omega^n + 1, \{\omega^n\})$, which does not contain a
  homeomorphic copy of $(\omega^m + 1, \{\omega^m\})$ for $m \ne n$. Due to
  the incomparability of these ends, the homeomorphism type of smaller and
  smaller neighborhoods of $z$ does not eventually stabilize.  This means that
  the end $z$ is not stable, and $D$ is not tame. 
  
    \begin{figure}[ht!]
        \centering
        \includegraphics[width=0.8\textwidth]{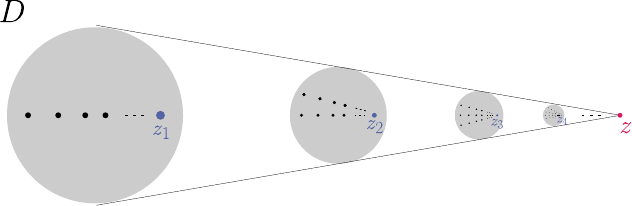}
        \caption{An illustration of the end space $D$ for \Cref{exa:nontame_basic}. The maximal ends are $z$ and $\{z_n\}_{n \ge 1}$, which are pairwise incomparable to one another. $D$ is not tame as the end $z$ is not stable.}
        \label{fig:nontame_nonlocCB}
    \end{figure}
    \end{EXA}

Now under the tameness condition, we state the Mann--Rafi's complete classifications of surfaces with CB, or CB-generated mapping class groups.

\begin{THM}[{\cite[Theorem 1.6]{mann2023large}}] \label{thm:CBgen}
    If $\Map(S)$ is CB-generated, then $E(S)$ is not of limit type, and $\Map(S)$ is of finite rank.  Moreover, for a surface with tame end space and locally CB (but not globally CB) $\Map(S)$, the converse holds.
\end{THM}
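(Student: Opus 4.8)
The statement splits into a necessity direction, which holds for all surfaces, and a converse that is asserted only under the tameness and local-CB hypotheses; the plan is to treat the two separately. Throughout I would work inside Rosendal's framework, using \Cref{def:CB} together with the elementary stability facts that a finite union of CB sets is CB and that the continuous homomorphic image of a CB set is CB. The single obstruction I would isolate and use repeatedly is the following: if $\Map(S)$ admits a continuous surjective homomorphism onto a countable discrete group $Q$ that is not finitely generated, then $\Map(S)$ is not CB-generated. This holds because the continuous image of a CB set is CB while a CB subset of a countable discrete group is finite, so a CB generating set of $\Map(S)$ would map onto a finite generating set of $Q$, a contradiction.

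For the necessity direction I would prove the contrapositive, showing that limit type and infinite rank each obstruct CB-generation. In the infinite-rank case the data of \Cref{def:infiniterank} --- pairwise nonequivalent stable ends $z_n$ whose classes $E(z_n)$ accumulate both into $X$ and into $E - U$ --- is exactly what is needed to build, for each $n$, a ``shift'' mapping class $s_n$ supported near $E(z_n)$ together with a continuous homomorphism $\phi_n \colon \Map(S) \to \Z$ detecting the net transport of ends of type $z_n$ across $U$. Since the $z_n$ are pairwise inequivalent and their supporting regions may be taken disjoint, these assemble into a continuous homomorphism onto a countable discrete group of infinite rank, such as an infinite-rank subgroup of $\bigoplus_n \Z$, and the obstruction above applies. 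The limit-type case is parallel but more delicate: the nested neighborhoods with $\bigcap_n U_n = X$ (see \Cref{fig:limit_type}) instead produce an unbounded continuous length function that grows along mapping classes pushed deeper and deeper toward $X$, so that no single CB set can generate while keeping this length bounded. The principal obstacle here is verifying that the detecting maps $\phi_n$ are genuinely continuous and mutually independent using only neighborhood and partial-order data; this is precisely the step that consumes the hypotheses that the $z_n$ be stable and pairwise nonequivalent.

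For the converse I would construct an explicit CB generating set and verify separately that it is CB and that it generates, following the construction carried out in \Cref{sec:CBgenset}. The local-CB hypothesis supplies, via the local-CB classification (\Cref{thm:locCB}), a CB neighborhood $V$ of the identity, which I would take to consist of mapping classes acting within a suitably chosen compact ``core'' subsurface together with homeomorphisms realizing the self-similarity of the maximal ends. To this I would adjoin a finite set $\{s_1, \dots, s_k\}$ of shift mapping classes: finiteness of this set is guaranteed by the finite-rank hypothesis, and the fact that shifts in only these finitely many directions are needed --- with no telescoping family escaping into a limit --- is guaranteed by the failure of limit type. The candidate $V \cup \{s_1, \dots, s_k\}$ is then CB, being a finite union of CB sets. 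The real content is that it generates: given $f \in \Map(S)$, one performs an equivariant change-of-coordinates expressing $f$, modulo an element of $V$, as a bounded-length word in the $s_i$. I expect this generation step to be the hardest part of the whole argument, and it is exactly where tameness is essential, since stability of the maximal ends and of their immediate predecessors is what makes the change-of-coordinates uniform and lets the ``infinite part'' of $f$ be absorbed into finitely many shifts and the neighborhood $V$. This also explains why the converse is restricted to the tame case: without stability the number of coordinate changes cannot be bounded, and non-tame surfaces such as the one in \Cref{exa:nontame_basic} show the converse can genuinely fail.
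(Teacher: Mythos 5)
First, a point of context: the paper does not actually prove this theorem --- it is quoted from \cite{mann2023large}, and the paper's own contribution is the expository description in \Cref{sec:CBgenset} of the generating set used in the converse direction. Your overall architecture (necessity by exhibiting continuous homomorphisms onto non-finitely-generated countable discrete groups, using that CB subsets of such groups are finite and that continuous homomorphisms send CB sets to CB sets; the converse by producing an explicit CB generating set consisting of a CB identity neighborhood plus finitely many extra elements) has the same shape as Mann--Rafi's argument as the paper presents it. The necessity half of your write-up is, however, only a plan: constructing the ``flux'' homomorphisms $\phi_n$ in the infinite-rank case and the unbounded length function in the limit-type case is the entire content of that direction, and you defer exactly that step.

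The concrete gap is in the converse: the set $V\cup\{s_1,\dots,s_k\}$ does not generate $\Map(S)$. The CB identity neighborhood supplied by \Cref{thm:locCB} is $\cV_K=\{g: g|_K=\id\}$, the maps that are the identity \emph{on} the finite-type core $K$ (not, as you describe, maps acting within the core), so at a minimum you must also adjoin a finite generating set $D$ for $\Map(K)$. More seriously, you omit the half-twists permuting ends of the same type that lie in different complementary components of $S\setminus K$. Elements of $\cV_K$ fix each complementary component setwise, elements of $\Map(K)$ extend by the identity outside $K$, and a shift fixes the two ends it runs between; so the group generated by your proposed set cannot realize a homeomorphism interchanging, say, the two maximal ends in \Cref{exa:MinfPfin} or the ends $x_i$ in \Cref{exa:np_finite}, which is precisely why those examples include the half-twists $g_{12}$ and $g_{i,i+1}$ as separate generators. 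Relatedly, you misattribute the roles of the two hypotheses: in the construction, finite rank is what bounds the number of shifts needed between two components, while ``not of limit type'' is what bounds the number of equivalence classes of ends shared between two components and hence the number of half-twists. In your proposal the limit-type hypothesis is assigned to the shifts and the half-twists never appear, so that hypothesis goes unused exactly where it is needed.
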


\begin{THM}[{\cite[Theorem 1.7]{mann2023large}}]
\label{thm:CB}
    If the genus of $S$ is $0$ or $\infty$, and $E(S)$ is self-similar or telescoping, then $\Map(S)$ is CB.  Furthermore, when $E(S)$ is countable or tame, then the converse holds. 
\end{THM}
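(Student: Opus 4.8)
The plan is to use the metric characterization of CB recalled just after \Cref{def:CB}: the group $\Map(S)$ is CB exactly when every compatible left-invariant metric on it has finite diameter (equivalently, every continuous left-invariant length function is bounded). I would separate the two halves of the statement: the forward implication, that genus $0$ or $\infty$ together with self-similarity \emph{or} telescoping forces $\Map(S)$ to be CB, and the converse under the standing hypothesis that $E(S)$ is countable or tame.

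For the forward direction in the self-similar case the engine is a two-step argument. First I would isolate a large subset $U \subset \Map(S)$ --- the homeomorphisms supported in the complement of a well-chosen compact subsurface $\Sigma$ --- and show that $U$ is CB as a set. This uses an infinite-swindle: self-similarity supplies infinitely many pairwise disjoint clopen pieces of $E(S)$, each carrying a copy $(D, D\cap E_G) \cong (E(S), E_G(S))$, and genus $0$ or $\infty$ guarantees the existence of a shift-type homeomorphism cyclically permuting infinitely many disjoint \emph{homeomorphic} copies of a fixed subsurface (translations of the planar structure when the genus is $0$, iterated handle shifts when it is $\infty$). The existence of infinitely many disjoint copies is exactly where the genus hypothesis is indispensable, since a finite positive genus cannot be split off infinitely often. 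The swindle then bounds any length function on $U$ by a universal constant. Second, I would use self-similarity again to write $\Map(S) = U^k$ for a fixed integer $k$: any mapping class is a bounded product of homeomorphisms each supported in a self-similar subsurface, and each such factor can be conjugated into $U$ by an absorbing homeomorphism. A bounded product of a CB set is CB, giving the claim. The telescoping case follows the same skeleton, with the finite set $\cM(E)$ of maximal ends replacing the single self-similar target: the telescoping property lets one nest neighborhoods of the several maximal ends into one another, so the displacement-and-swindle argument can be run simultaneously near each maximal end.

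For the converse I would argue contrapositively, assuming $\Map(S)$ is CB and $E(S)$ countable or tame and producing an unbounded continuous length function (equivalently, a continuous isometric action with unbounded orbit) whenever the genus is finite and positive, or $E(S)$ is neither self-similar nor telescoping. If $0 < g(S) < \infty$, the genus-$g$ core is a canonical, non-displaceable finite feature that obstructs the absorption step above and yields an unbounded invariant. If $E(S)$ is neither self-similar nor telescoping, I would exploit the tameness or countability hypothesis to make the partial order $(E, \preccurlyeq)$ tractable (maximal and immediately subordinate ends stable, no uncontrolled accumulation as in \Cref{exa:nontame_basic}); the simultaneous failure of both conditions then localizes the obstruction to a configuration of maximal ends that cannot be telescoped into one another, and from this configuration I would build an action on a tree or on a graph of arcs whose vertices record how far a mapping class has displaced a fixed neighborhood toward infinity. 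The very failure of self-similarity/telescoping is what prevents this displacement from being bounded, witnessing non-CB-ness.

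The step I expect to be the main obstacle is the converse, and specifically the explicit construction of unbounded length functions from the combinatorial failure of self-similarity or telescoping; this is precisely where the countable-or-tame hypothesis is genuinely used, since without it the partial order on ends can accumulate in the uncontrolled way of \Cref{exa:nontame_basic} and one cannot pin down the offending configuration. A secondary difficulty lies in the forward telescoping case, where the swindle must displace neighborhoods of several maximal ends at once while keeping all supports disjoint; verifying that the required shift homeomorphisms exist and interact appropriately is the technical crux there.
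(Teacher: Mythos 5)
This statement is quoted verbatim from Mann--Rafi \cite{mann2023large} and the present paper offers no proof of it, so there is no internal argument to compare your sketch against; the comparison has to be with the source. Measured against that, your forward-direction outline is in the right spirit but has a concrete gap at its first step: self-similarity does \emph{not} say that $E(S)$ contains infinitely many pairwise disjoint clopen copies of $(E(S),E_G(S))$. It says that in every finite clopen partition \emph{some} piece contains one copy. Mann--Rafi must first prove (their Proposition 3.2 and Lemma 3.7) that a self-similar end space has a maximal end $x$ every neighborhood of which contains a copy of the whole end space, and only then can one nest copies toward $x$; your swindle cannot start without this. Likewise, the absorption step ``$\Map(S)=U^k$'' is precisely Rosendal's criterion ($G=(FV)^k$ for finite $F$) and is the actual content of the forward direction --- the statement that any mapping class is a bounded product of conjugates into $U$ is what needs proving, and you assert it. The genus hypothesis enters exactly where you say, so that part of the diagnosis is right.

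The converse is where your plan genuinely diverges from what is done, and where the gap is largest. Mann--Rafi do not construct tree actions or displacement graphs; the obstruction when $0<g(S)<\infty$, and in most of the end-space cases, is the non-displaceable finite-type subsurface criterion (their Theorem 1.9, quoted above as one of the results labelled \Cref{thm:nondisplaceable}): one shows that failure of self-similarity and telescoping forces a finite $\Homeo(S)$-invariant configuration of ends (or the genus core), whose witnessing subsurface intersects all its images, and non-displaceability is converted into an unbounded length function by counting intersections with translates. The tame/countable hypothesis is used to show that maximal ends are stable and that the finite invariant set of ``inequivalent maximal types'' actually exists --- compare \Cref{exa:nontame_basic}, where instability of the maximal end is exactly what breaks this localization. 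You correctly identify \emph{where} the hypothesis is needed, but ``build an action on a tree of arcs recording displacement'' is not a proof and is not the mechanism used; without the reduction to an invariant finite configuration and the non-displaceability theorem, the contrapositive does not close.
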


The end space of a surface is \textbf{self-similar} if, for every partition $E(S) = E_1 \sqcup \dots \sqcup E_n$, there is a homeomorphic copy of the end space $D \cong E(S)$ contained in one of the $E_i$.  The telescoping condition is a generalization of self-similarity (see \cite[Definition 3.3]{mann2023large}). 

Mann--Rafi also give a simpler obstruction for a surface to have a CB mapping class group, for which we need the following definition:

\begin{DEF}[{\cite[Definition 1.8]{mann2023large}}]
    We say two subsurfaces $R$ and $R'$ of $S$ \textbf{intersect} if every subsurface homotopic to $R$ intersects every subsurface homotopic to $R'$.  A connected finite-type subsurface $R \subset S$ is \textbf{non-displaceable} if $R$ and $f(R)$ intersect for every $f \in \Homeo(S)$.
\end{DEF}

\begin{THM}[{\cite[Theorem 1.9]{mann2023large}}] \label{thm:nondisplaceable}
    If $S$ contains a non-displaceable subsurface (of finite type), then $\Map(S)$ is not CB.
\end{THM}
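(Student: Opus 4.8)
The plan is to prove the contrapositive directly from \Cref{def:CB}: since $\Map(S)$ is CB precisely when every continuous isometric action of $\Map(S)$ on a metric space has bounded orbits (take $A=\Map(S)$), it suffices to manufacture a single continuous isometric action of $\Map(S)$ with an \emph{unbounded} orbit, and the non-displaceable subsurface $R$ is exactly the input from which to build it. First I would assemble a metric space out of the $\Map(S)$-orbit of $R$. Let the vertex set be the orbit $\Map(S)\cdot[R]$ of isotopy classes, and declare two vertices adjacent when the corresponding subsurfaces realize a minimal-complexity intersection configuration, producing a graph $\mathcal{G}$ with the path metric. The group $\Map(S)$ permutes these vertices and preserves adjacency, so it acts on $\mathcal{G}$ by isometries. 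Continuity is the routine part: because $R$ is compact, the setwise stabilizer of $[R]$ is open in the compact-open topology, so each orbit map $g\mapsto g\cdot v$ is continuous and the action is continuous.

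The substance is then split into two steps. The first is connectivity of $\mathcal{G}$, which I would obtain by a change-of-coordinates argument: any two copies of $R$ can be joined by a finite chain of copies, each adjacent to the next, by pushing a copy of $R$ across the connected surface $S$ in controlled steps. The second, and the real heart, is that $\operatorname{diam}\mathcal{G}=\infty$, and more sharply that $d_{\mathcal{G}}([R],f_n[R])\to\infty$ for a well-chosen sequence $f_n\in\Map(S)$. Here the hypothesis that $R$ is non-displaceable enters decisively: since $R$ and $g(R)$ intersect for \emph{every} $g\in\Map(S)$, the orbit $\{g(R)\}$ consists of pairwise-overlapping finite-type domains, so subsurface projections between them are everywhere defined. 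This is exactly the standing hypothesis needed to feed the orbit into a Bestvina--Bromberg--Fujiwara projection complex and obtain a quasi-tree $\mathcal{Q}$ on which $\Map(S)$ acts; homogeneity of the orbit (every domain is a translate of $R$) supplies the required uniform projection constants. Taking $f_n=T_\gamma^{\,n}$ for a curve $\gamma$ filling $R$ together with $\partial R$ (or, when $R$ has low complexity, reducing to a non-displaceable core curve and using the annular twisting coordinate $\cong\Z$) makes the projection coordinate grow linearly, forcing the orbit of $[R]$ to escape in $\mathcal{Q}$, hence in $\mathcal{G}$.

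The main obstacle I anticipate is precisely this distance lower bound. The adjacency relation permits large, uncontrolled intersections, so a priori two far-apart copies of $R$ might be linked by a short chain; certifying that distance genuinely grows requires a coarsely Lipschitz, coarsely $\Map(S)$-equivariant retraction, and the honest difficulty is that the base domain $R$ is itself moved by the group, so no naive equivariant map into a single curve graph $\mathcal{C}(R)$ exists. Routing through the projection complex is what resolves this, at the cost of verifying the Behrstock inequality and bounded-geodesic-image estimates with uniform constants in the infinite-type setting. A lower-tech fallback, should those estimates prove delicate, is to replace $\mathcal{G}$ by the space of geodesic currents equipped with a metric built from the intersection pairing $i(\cdot,\cdot)$: then $\Map(S)$ acts by isometries and $i(\partial R,\,f_n\partial R)\to\infty$ gives the unbounded orbit immediately, shifting the remaining work to setting up a genuine metric and checking continuity of that action.
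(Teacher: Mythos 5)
First, a point of order: this paper does not prove the statement --- it is quoted verbatim from Mann--Rafi, so there is no in-house argument to compare against. Measured against the original proof (which, like your plan, ultimately rests on producing a continuous isometric action --- equivalently an unbounded continuous length function, via the equivalence of \Cref{def:CB} with the left-invariant-metric characterization --- whose unboundedness is certified by subsurface projections to translates of $R$), your overall strategy is the right one. But as written the proposal has genuine gaps that sit exactly at the heart of the matter. The auxiliary graph $\mathcal{G}$ is not actually defined: non-displaceability forces \emph{every} pair of translates $f(R)$, $g(R)$ to intersect, so the naive intersection graph on the orbit is complete and has diameter $1$; everything therefore hinges on the undefined phrase ``minimal-complexity intersection configuration,'' and you never relate $d_{\mathcal{G}}$ to the projection-complex distance anyway. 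You should discard $\mathcal{G}$ and act directly on the Bestvina--Bromberg--Fujiwara space $\mathcal{Q}$ --- but then the entire content of the theorem is the verification of the projection axioms for the collection $\{f(R)\}_{f\in\Homeo(S)}$ inside an \emph{infinite-type} ambient surface. The Behrstock inequality is local and fine, but the finiteness axiom (only finitely many translates $Y$ with $d_Y(X,Z)$ above threshold) is not automatic when the indexing set is a full homeomorphism orbit in an infinite-type surface, and this is precisely the step you defer as ``delicate.'' Without it there is no lower bound on $d_{\mathcal{Q}}$ and hence no unbounded orbit; the proof, as proposed, stops exactly where it needs to begin.

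Two further concrete problems. First, the choice of unbounded direction: a curve $\gamma$ filling $R$ need not exist in low complexity, and when you retreat to twisting about a core or boundary curve you must check that \emph{that annulus} carries an everywhere-defined, linearly growing projection coordinate --- it is entirely possible for $R$ to be non-displaceable while each individual curve of $\partial R$ is displaceable, so the reduction ``to a non-displaceable core curve'' is not free and requires a case analysis you have glossed over. Second, the proposed fallback via geodesic currents does not work as stated: the intersection pairing $i(\cdot,\cdot)$ is not a metric (it vanishes on disjoint curves and fails the triangle inequality), and currents on infinite-type surfaces are not an off-the-shelf isometric-action target; showing $i(\partial R, f_n\partial R)\to\infty$ proves nothing about coarse boundedness until an actual $\Map(S)$-invariant metric is constructed, which is at least as hard as the BBF route. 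In short: correct architecture, correct use of the non-displaceability hypothesis as the source of everywhere-defined projections, but the unboundedness certificate --- the only nontrivial part of the theorem --- is outsourced to machinery whose hypotheses are exactly what remain to be checked.
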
 

The presence of non-displaceable subsurfaces greatly reduces the number of surfaces with CB \emph{pure} mapping class group, i.e., the subgroup of the mapping class group fixing the ends pointwise.  In particular: 
\begin{THM}[{\cite[Proposition 1.2]{hill2023largescale}}] \label{thm:nondisplaceable}
    If $|E(S)| \ge 3$ (possibly infinite), then $S$ contains a subsurface that is non-displaceable by pure mapping classes.  
\end{THM}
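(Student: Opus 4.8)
The plan is to produce one connected finite-type subsurface $R$ that cuts the end space into at least three nonempty clopen pieces, and then to show that a \emph{pure} mapping class, being unable to permute these pieces, cannot push $R$ off itself. For the construction: since $|E(S)|\ge 3$ and $E(S)$ is compact, Hausdorff, and totally disconnected, I choose three distinct ends together with a clopen partition $E(S)=A\sqcup B\sqcup C$ separating them. Fix a connected compact exhaustion $\{K_i\}$ of $S$; as the three chosen ends are distinct points of $\varprojlim\pi_0(S\setminus K_i)$, for large $i$ they lie in three different components of $S\setminus K_i$. Take $R=K_i$ for such an $i$, enlarged so as to absorb every complementary component with compact closure. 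Then $R$ is connected and of finite type, the components $V_1,\dots,V_k$ of $S\setminus R$ satisfy $k\ge 3$, each $V_j$ is noncompact, and the nonempty sets $G_j:=E(V_j)$ partition $E(S)$.

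For the main argument, fix $f\in\PMap(S)$ and suppose for contradiction that $R$ and $f(R)$ do not intersect, so that there are homotopic copies $R_0\simeq R$ and $R_1\simeq f(R)$ with $R_0\cap R_1=\emptyset$; both are connected. The complementary regions of $f(R)$ are the $f(V_j)$, and since $f$ fixes every end we have $E(f(V_j))=G_j$; because the partition of $E(S)$ cut out by a subsurface is a homotopy invariant, both $R_0$ and $R_1$ separate $E(S)$ into exactly the pieces $G_1,\dots,G_k$. Now $R_1$ is connected and disjoint from $R_0$, hence contained in a single component of $S\setminus R_0$, say the one $V$ with end set $G_m$. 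Then $S\setminus V$ is connected (it is $R_0$ together with the remaining complementary regions, each of which meets $\partial R_0$) and is contained in $S\setminus R_1$, so it lies in one component of $S\setminus R_1$, whose end set is some $G_\ell$. Reading off ends gives $\bigcup_{j\ne m}G_j\subseteq G_\ell$. But $k\ge 3$ forces at least two distinct indices $j\ne m$, and the corresponding $G_j$ are nonempty and pairwise disjoint, so they cannot all sit inside the single piece $G_\ell$, a contradiction. Hence $R$ and $f(R)$ intersect for every $f\in\PMap(S)$, i.e.\ $R$ is non-displaceable by pure mapping classes.

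The step I expect to need the most care is the invariance claim used above: that the decomposition of $E(S)$ recorded by the complementary regions of a subsurface depends only on its homotopy class, and that a pure mapping class carries this decomposition to itself (this is precisely where $f_*=\id$ on $E(S)$ enters). This is also what pins down the hypothesis $|E(S)|\ge 3$: for $|E(S)|=2$ the union $\bigcup_{j\ne m}G_j$ is a single piece and no contradiction arises, matching the fact that a separating subsurface can be displaced toward one of two ends; and for non-pure classes the $G_j$ may be permuted, so the argument genuinely requires purity. Everything else is routine—the existence of $R$ from the exhaustion, the connectivity of $S\setminus V$, and the absorption of compact complementary pieces—and genus is irrelevant, since only the induced partition of $E(S)$ enters the argument, never $E_G(S)$ or the finite genus inside $R$.
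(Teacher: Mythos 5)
The paper does not actually prove this statement---it is quoted directly from \cite{hill2023largescale}---but your argument is correct and follows essentially the same separation argument as that source: a pure mapping class fixes each piece of the end partition induced by $R$, and a disjoint homotopic copy of $f(R)$ would force at least two of the $k\ge 3$ nonempty, pairwise disjoint pieces $G_j$ into a single piece $G_\ell$, which is impossible. The one step you rightly flag, that disjoint homotopic copies induce the same end partition, is standard once compact complementary components have been absorbed so that every boundary curve of $R$ is essential (homotopic essential subsurfaces are ambient isotopic, and an ambient isotopy acts trivially on $E(S)$).
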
 

Consequently, the only surface with a globally CB pure mapping class group is the Loch Ness monster surface \cite[Theorem 1.1(a)]{hill2023largescale}.

In the case when $S$ has a unique maximal end, the surfaces admitting a globally CB mapping class group is greatly reduced by the work of Jim\'enez-Rolland--Morales: 

\begin{THM}[{\cite[Theorem 1.3]{rolland2023large}}]
Let $S$ be an infinite-type surface \emph{with a unique maximal end} and suppose that $\Map(S)$ is locally CB.  Then $\Map(S)$ is globally CB if and only if the genus of $S$ is 0 or $\infty$.  
    
\end{THM}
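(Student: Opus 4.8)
The plan is to prove the two directions separately, keeping the standing hypotheses (unique maximal end, locally CB) in force throughout and treating the genus dichotomy as the only variable.

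For the forward implication I would argue the contrapositive: if the genus $g(S)$ is finite and positive, then $\Map(S)$ is not globally CB. The idea is to localize the genus. Since $g(S) < \infty$, no end is accumulated by genus, so all $g = g(S)$ handles lie in a single finite-type subsurface $R \subset S$ whose complement is planar. Any $f \in \Homeo(S)$ preserves total genus, so $f(R)$ is again a genus-$g$ subsurface; if $R$ and $f(R)$ admitted disjoint representatives, then $S$ would contain two disjoint genus-$g$ subsurfaces and hence have genus $\geq 2g > g$, a contradiction. Thus $R$ is non-displaceable, and the non-displaceable subsurface obstruction (\Cref{thm:nondisplaceable}) shows $\Map(S)$ is not CB. It is worth noting that this direction uses neither the unique maximal end nor the local CB hypothesis.

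For the converse I would reduce to Mann--Rafi's CB criterion (\Cref{thm:CB}): since the genus is assumed to be $0$ or $\infty$, it suffices to show that $E(S)$ is self-similar (or telescoping), as the forward implication of \Cref{thm:CB} requires no tameness. Here both remaining hypotheses enter. Let $x$ be the unique maximal end; because homeomorphisms respect the partial order of \Cref{partialorder} and $x$ is the global maximum, $x$ is preserved by $\Map(S)$ and every other end $y$ satisfies $y \preccurlyeq x$. Local CB-ness furnishes a finite-type ``core'' $R$ controlling the coarse geometry, together with a finite clopen partition of the complementary ends into self-similar or telescoping pieces. I would then verify self-similarity directly: for any finite clopen partition $E(S) = E_1 \sqcup \dots \sqcup E_n$, the end $x$ lies in exactly one piece $E_{i_0}$, and I would build a homeomorphic copy of $E(S)$ inside $E_{i_0}$ by using the domination $y \preccurlyeq x$ to push a copy of each end type into a neighborhood of $x$, with the genus-$0$ or genus-$\infty$ assumption ensuring no genus obstruction to realizing these embeddings (in the genus-$\infty$ case one must check that the genus accumulates onto the type of $x$ compatibly).

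The main obstacle I anticipate is exactly this last verification carried out \emph{without} a tameness assumption. A priori the maximal end $x$ need not be stable, so neighborhoods of $x$ can change homeomorphism type as they shrink, and one cannot simply invoke stability to reassemble a copy of $E(S)$ near $x$. The crux is to exploit local CB-ness to control the homeomorphism types of neighborhoods of $x$ relative to the core $R$ — showing that the partition witnessing self-similarity can always be transported into $E_{i_0}$ by homeomorphisms supported off $R$ — and to reconcile this reassembly with infinite genus. Should the end space fail to be self-similar, I would instead establish the weaker telescoping condition permitted in \Cref{thm:CB}, which I expect to follow from the same domination-plus-local-CB analysis.
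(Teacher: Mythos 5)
This statement is quoted in the paper as a theorem of Jiménez-Rolland--Morales and is given no proof here, so there is no in-paper argument to compare yours against; I can only evaluate the proposal on its own terms. Your forward direction is correct and complete: when $0 < g(S) < \infty$, a finite-type subsurface $R$ carrying all of the genus is non-displaceable (any subsurface homotopic to $R$ or to $f(R)$ still carries a rank-$2g$ symplectic subspace of $H_1(S)$, so disjoint representatives would force $g(S) \ge 2g$), and \Cref{thm:nondisplaceable} then gives non-CB-ness. You are also right that this direction uses neither standing hypothesis.

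The converse, however, contains a genuine gap, and it sits exactly where you flag it: you reduce to showing $E(S)$ is self-similar, but then describe the verification only as an ``anticipated obstacle'' rather than carrying it out, and the domination $y \preccurlyeq x$ alone is not enough to assemble a copy of the whole end space near $x$. The missing ingredient is the structure supplied by \Cref{thm:locCB}. With a unique maximal end $x$, that decomposition forces $\cA = \{A\}$ with $\cM(A) = \{x\}$, and condition 2(c) -- the small zoom condition -- says precisely that every neighborhood $V$ of $x$ contains, after an ambient homeomorphism, the entire complementary component with end set $A$; this is what substitutes for stability and is why no tameness hypothesis is needed. To upgrade the resulting copy of $A$ to a copy of $E(S)$ inside the partition piece containing $x$, you must still absorb the sets $P \in \cP$ (each homeomorphic to a clopen subset of $A$ by 2(b)) and any isolated planar ends of $K$; for the latter, note that an end $y$ with $y \not\preccurlyeq x$ would itself be maximal, contradicting uniqueness, so every end type of $S$ already occurs in every neighborhood of $x$. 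Self-similarity must also be verified for the pair $(E, E_G)$, which is where the genus hypothesis genuinely enters: when $g(S) = \infty$, \Cref{thm:locCB} lets you take $K$ of genus zero, so the zoom homeomorphisms respect $E_G$. Finally, your fallback to telescoping is unlikely to help: that condition is built around a pair of distinguished maximal ends, so under the unique-maximal-end hypothesis the self-similarity verification cannot be sidestepped.
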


In the context of \cite{rolland2023large} and \cite{hill2023largescale}, the conditions of being CB-generated and locally CB coincide.  This is not true in general, regardless of the tameness of the end space (see \Cref{exa:kanagawa,exa:nontame_locCB}). However, Mann--Rafi
completely classified the surfaces with locally CB mapping class groups regardless of whether the end space of the surface is tame or not. 

\begin{THM}[{\cite[Theorem 5.7]{mann2023large}}] \label{thm:locCB}
    $\Map(S)$ is locally CB if and only if there is a finite-type surface (not necessarily compact) $K \subset S$ satisfying the following:
    \begin{enumerate}[label=(\arabic*)]
    \item the genus of each component of $S \setminus K$ is 0 or $\infty$, and
    \item $K$ partitions $E=E(S \setminus \operatorname{int}(K))$ into finitely many clopen sets 
    \[
        E = \left( \bigsqcup_{A \in \cA} A\right) \sqcup \left( \bigsqcup_{P \in \cP} P\right),
    \]
    such that
    \begin{enumerate}[label=(\alph*)]
        \item Each $A \in \cA$ is self-similar, $\cM(A) \subset \cM(E)$, and $\cM(E) = \bigsqcup_{A \in \cA}\cM(A)$.
        \item Each $P \in \cP$ is homeomorphic to a clopen subset of some $A \in \cA$. 
        \item For any $x_A \in \cM(A)$, and any neighborhood $V$ of the end $x_A$ in $S$, there is $f_V \in \Homeo(S)$ so that $f_V(V)$ contains the complementary region to $K$ with end set $A$.
        \end{enumerate}
    \end{enumerate}
    Moreover, in this case, $\cV_K = \{g \in \Homeo(S): g|_K = \id\}$ is a CB neighborhood of the identity, which witnesses that $\Map(S)$ is locally CB.  Also, we may always take $K$ to have no genus if $S$ has infinite genus, and genus equal to that of $S$ otherwise.  If the number of isolated planar ends(equivalently, the punctures) of $S$ is finite, we may choose $K$ to have all of the isolated planar ends so that $S \setminus K$ has none.
\end{THM}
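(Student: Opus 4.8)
The plan is to reduce the statement to a single coarse-boundedness computation. The sets $\cV_K = \{g \in \Homeo(S) : g|_K = \id\}$, ranging over finite-type subsurfaces $K \subset S$, form a neighborhood basis of the identity in $\Map(S)$ for the compact-open topology; consequently $\Map(S)$ is locally CB if and only if some $\cV_K$ is CB. So both directions amount to matching the combinatorial conditions (1)--(2) on $K$ with the statement that $\cV_K$ has finite diameter in every compatible left-invariant metric (\Cref{def:CB}).

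For the backward direction (conditions $\Rightarrow$ locally CB), I would fix $K$ as in the hypothesis and prove $\cV_K$ is CB by an absorption-and-fragmentation argument. Using the coarse-boundedness criterion of Rosendal, it suffices to show that for every open neighborhood $W$ of the identity there is a finite set $F$ and $n \in \N$ with $\cV_K \subseteq (FW)^n$. First I would decompose any $g \in \cV_K$ according to the complementary components of $K$, each of which (by condition (1)) has genus $0$ or $\infty$ and end set one of the clopen pieces. For a self-similar piece $A \in \cA$, the subgroup of $\cV_K$ supported on the corresponding region is CB: this is exactly the engine behind \Cref{thm:CB}, where self-similarity together with the genus restriction lets one push any prescribed behavior off to infinity. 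The crucial extra input is condition (c): given a maximal end $x_A$, any neighborhood $V$ of $x_A$ can be spread by a homeomorphism $f_V$ to swallow the entire region with end set $A$, which is the ``handle-shift''-type move that makes the absorption uniform. Condition (b) lets me hide each planar piece $P$ inside some self-similar $A$, so the planar ends add no new unboundedness. Putting these together expresses $\cV_K$ as a bounded-length product of CB pieces, hence CB.

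For the forward direction (locally CB $\Rightarrow$ conditions), I would start from a CB neighborhood $\cV_{K_0}$ and first enlarge $K_0$ to a finite-type $K$ absorbing all finite genus, so that every complementary component has genus $0$ or $\infty$; this is where the ``moreover'' normalizations (no genus when $S$ has infinite genus, all isolated planar ends inside $K$ when there are finitely many) are arranged. Then I would argue by contradiction that each structural condition must hold: if some complementary region carried finite positive genus it would be a non-displaceable finite-type subsurface, so \Cref{thm:nondisplaceable} would force $\Map(S)$---and a fortiori the CB set $\cV_K$---to fail boundedness; if the maximal ends failed to partition as in (a), or if condition (c) failed for some $x_A$, I would manufacture a continuous left-invariant length function (via an isometric action on a tree or graph of end-neighborhoods recording ``depth toward $X$'') on which $\cV_K$ has infinite diameter, contradicting CB-ness.

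The main obstacle I expect is the backward direction's uniform fragmentation: showing that an arbitrary $g \in \cV_K$ can be written as a product of a number of factors bounded independently of $g$, each supported in (or pushed by $f_V$ into) a single complementary region, and that each such factor lies in a fixed CB set. Coordinating the self-similarity of the pieces $A$ with the absorbing homeomorphisms of condition (c)---so that the count $n$ and the finite set $F$ in Rosendal's criterion do not grow with $g$---is the real content; the genus dichotomy and the planar-piece bookkeeping are comparatively routine once this is in place.
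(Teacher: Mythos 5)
This theorem is imported verbatim from Mann--Rafi \cite{mann2023large} and is not proved anywhere in this paper, so there is no in-house argument to compare against; your overall skeleton (reduce local CB-ness to CB-ness of some basic neighborhood $\cV_K$, prove the backward direction via Rosendal's exhaustion criterion using self-similarity together with the absorbing homeomorphisms of condition (2c), and prove the forward direction by constructing unbounded length functions) does broadly match the strategy of the original. One small remark on the backward direction: the ``fragmentation'' you flag as the main obstacle is actually automatic, since any $g\in\cV_K$ fixes $\partial K$ pointwise and therefore preserves each complementary component of $K$; the genuine work is the commutator/shift trick showing that the homeomorphisms supported in a single component lie in a fixed CB subset of $\Map(S)$, which is where conditions (2a) and (2c) enter.

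There is, however, a genuine logical gap in your forward direction. You argue that if a complementary component of $K$ had finite positive genus, it would yield a non-displaceable finite-type subsurface, and that \Cref{thm:nondisplaceable} ``would force $\Map(S)$ --- and a fortiori the CB set $\cV_K$ --- to fail boundedness.'' That theorem obstructs only \emph{global} CB-ness of $\Map(S)$; it says nothing about whether the subset $\cV_K$ is CB, and ``$\Map(S)$ is not CB'' does not imply ``$\cV_K$ is not CB.'' Indeed, the surface of \Cref{exa:np_finite} with $n\ge 2$ ends accumulated by genus and $p\ge 1$ punctures contains a non-displaceable finite-type subsurface (any finite-type $K$ carrying all the punctures is non-displaceable, since every homotope of $K$ and of $f(K)$ must contain a punctured-disk neighborhood of each puncture), yet $\cV_K$ is CB and $\Map(S)$ is locally CB. So non-displaceability cannot be the mechanism that rules out finite positive genus in a complementary component; one must instead produce an unbounded invariant on $\cV_K$ itself (for instance, tracking where the finitely many handles of that component can be sent by elements fixing $K$), which is the actual content of Mann--Rafi's argument. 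The remaining contradictions you invoke for conditions (2a) and (2c) --- ``manufacture a length function recording depth toward $X$'' --- name the right kind of object but leave all of the construction, which is where the difficulty of the theorem lives.
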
 

We will only consider surfaces that satisfy \Cref{thm:locCB} with $\mathcal{P} = \emptyset$.  This need not always be the case; for an example with $\mathcal{P} \ne \emptyset$, see \cite[Example 5.10]{mann2023large}.

\begin{RMK}
    We will often refer to condition 2(c) of \Cref{thm:locCB} as the ``small zoom condition.'' Intuitively, this condition says that if we examine a maximal end $x_A$ in $\bigcup_{A \in \cA} A$ closely, we can always zoom in further to find a homeomorphic copy of $A$ contained within it.
\end{RMK}

Note that our statement of \Cref{thm:locCB} differs slightly from the wording {\cite[Theorem 5.7]{mann2023large}}. In their statement, Mann--Rafi say that $K$ partitions $E(S)$, whereas we state that it partitions $E(S \setminus \Int(K))$. However, it is clear from the final paragraph of their theorem statement, as well as from the proof, that they intend $E(S \setminus\Int(K))$.

\section{CB-generating set for $\Map(S)$}
\label{sec:CBgenset}

 Let $S$ be a surface whose mapping class group is CB-generated as a consequence of \Cref{thm:CBgen}, i.e., a surface with tame end space, not of limit type, with finite rank mapping class group.  The goal of this section is to describe the CB-generating set defined in the proof of \cite[Theorem 1.6]{mann2023large}. These include mapping classes that arise in the setting of finite-type surfaces, as well as shift maps which are unique to infinite-type surfaces. Such maps were first considered in \cite[Section 6]{patel2018algebraic}, and are more generally defined as follows.

\begin{DEF}
    Consider the infinite strip $\mathbb{R} \times [-1,1]$, and remove infinitely many open disks of radius $\frac14$
    centered at the points $(n, 0)$ for $n \in \mathbb{Z}$.  Now, fix a single
    surface $\Sigma$ that has exactly one boundary component.  At each of the
    removed disks of the strip, attach a copy of $\Sigma$ by gluing its boundary
    along the boundary of the removed disk.  Call the resulting surface
    $\Sigma'$.

    If a surface $S$ has a subsurface homeomorphic to $\Sigma'$ as constructed above, then there is a $\textbf{generalized shift}$, or more simply a \textbf{shift}, $h \in \Map(S)$ which acts like a translation within the strip.  Specifically, within the strip $\Sigma'$,  $h$ sends $(x, y) \mapsto (x + 1, y)$ outside an $\varepsilon$ neighborhood of the boundary of $\Sigma'$, tapering to the identity on $\partial \Sigma'$, and extending by the identity on the rest of $S$.   
    When the surface attached at each removed disk of the strip is a torus with one boundary component, the surface $\Sigma'$ is an infinite genus strip and the shift map $h$ is called a \textbf{handle shift}, see \Cref{fig:handleshift}.  If $\Sigma$ is a punctured disk, then $h$ is called a \textbf{puncture shift}. In general, we will call the generalized shift $h$ a \textbf{$\Sigma$-shift}.
\end{DEF}

\begin{figure}[ht!]
    \centering
    \includegraphics[width=1\textwidth]{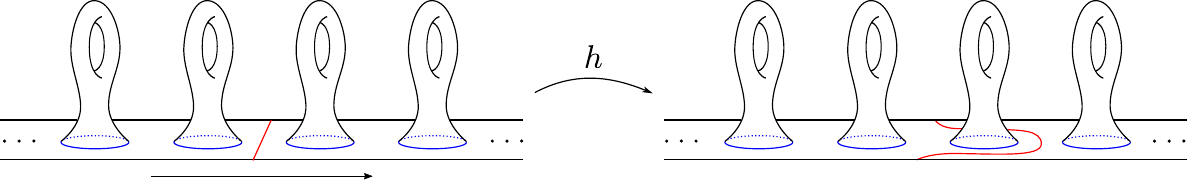}
    \caption{A handle shift map $h$.}
    \label{fig:handleshift}
\end{figure}

A pair of isolated punctures in a surface can be permuted via a \emph{half-twist} defined as follows. 

\begin{DEF}

   Let $S$ be an infinite-type surface with two punctures $a$ and $b$ (possibly with many other ends).
   \begin{figure}[ht!]
    \centering
    \begin{overpic}[width=.8\textwidth]{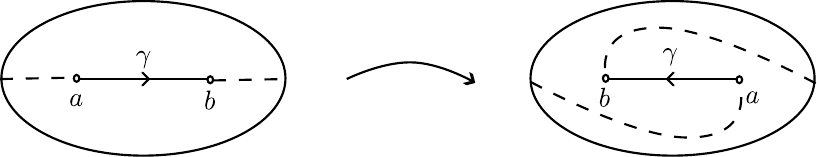}
    \put(30,17){$\alpha$}
    \put(95,17){$\alpha$}
    \end{overpic}
    \caption{Permuting two ends $a$ and $b$ of the same type via a half-twist. The arc $\gamma$ is a path from $a$ to $b$ to illustrate what happens to a small disk neighborhood of $a$ and $b$.}
    \label{fig:halftwist}
\end{figure}
   Let $\alpha$ be a separating simple closed curve such that the cut surface has two connected components, one of which is homeomorphic to a disk with the two punctures $a$ and $b$.  Let $\gamma$ denote an arc in this disk from $a$ to $b$. A \textbf{half-twist} is a simultaneous point push of $a$ along $\gamma$ to $b$ and point push of $b$ along $\gamma^{-1}$ to $a$ as depicted in \Cref{fig:halftwist} (see also \cite[Section 9.1.3]{farb2011primer}).

   Similarly, any two ends of the same type, say $x$ and $y$, can be permuted using a half-twist: simply replace the punctures $a$ and $b$ in \Cref{fig:halftwist} with a small neighborhood of $x$ and $y$.

\end{DEF}

The proof of \cite[Theorem 1.6]{mann2023large} is constructive in the sense that Mann--Rafi show that the union of the following is a CB-generating set\footnote{Mann--Rafi also show that the first four sets together generate a group containing $\PMap(S)$ (see \cite[Lemma 6.21]{mann2023large}). By  \cite[proof of Theorem 1.1(c)]{hill2023largescale}, $\cV_K\cup D\cup H$ suffice to generate $\PMap(S)$.} for $\Map(S)$:
\begin{itemize}
    \item the identity neighborhood $\cV_K$,
    \item a finite generating set $D$ for $\Map(K)$, 
    \item a finite collection $F$ of primitive \emph{generalized shifts},
    \item a finite collection $H$ of \emph{handle shifts}, and
    \item a finite collection of half-twists $T$ permuting some 
      of the ends (see below).
\end{itemize}

Since CB-generated groups are necessarily locally CB (\Cref{prop:CBimplications}), it follows from \Cref{thm:locCB} that $\Map(S)$ admits a CB identity neighborhood $\cV_K$. Moreover, because the union of a CB set with finitely many additional elements remains CB, the full generating set described above is indeed CB.

We now explain why only finitely many elements of each generator type are needed. First, since $K$ is a finite-type subsurface, $\Map(K)$ is finitely generated by a set $D$ by the classical Dehn–Lickorish theorem \cite{dehn1938gruppe, lickorish1964finite}. Any mapping class supported entirely within a single connected component of $S \setminus K$ can be generated using elements from $\cV_K \cup D$. Similarly, compositions of maps supported on distinct components can be generated by elements of $\cV_K \cup D$. However, more general elements of $\Map(S)$ may move data between different components of $S \setminus K$. To generate all such maps, it is enough to include the following additional generators: the handle shifts $H$, the generalized shifts $F$, and the half-twists $T$.

To see why $F$ and $H$ can be taken to be finite sets, first observe that $K$ partitions $E(S)$ into finitely many connected components. Let $S_i$ and $S_j$ be two distinct components of $S \setminus K$. For any type of shift involving an end in $S_i$ and one in $S_j$, it suffices to include a single such shift in the generating set because all other shifts of the same type between $S_i$ and $S_j$ can be obtained by conjugating with elements of $\cV_K$ and $D$. Since there are only finitely many such pairs $(S_i, S_j)$, we require only finitely many shifts of each type. Moreover, if infinitely many distinct types of shifts were necessary between $S_i$ and $S_j$, then $\Map(S)$ would have infinite rank, a contradiction.

A similar argument applies to the set of half-twists $T$. Any permutation of ends within a single component of $S \setminus K$ can again be generated using $\cV_K \cup D$. For permutations involving ends in different components, it suffices to include one representative half-twist for each equivalence class of ends (since only equivalent ends may be permuted). If there were infinitely many such equivalence classes between two components, $S$ would be of limit type, again, a contradiction.

To make these arguments more concrete, we will work through several explicit examples in \Cref{sec:examples}.

\section{Examples}\label{sec:examples}

In this section, we consider examples of the CB-generating set of $\Map(S)$ for surfaces satisfying the hypothesis of \Cref{thm:CBgen}, and hence generated by the CB set described in \Cref{sec:CBgenset}. Through these examples and accompanying discussion, we also aim to illustrate why the generating set introduced by Mann--Rafi \cite{mann2023large} never requires infinitely many handle shifts, generalized shifts, or half-twists.

In \cite[proof of Theorem 13]{vlamis2019notes}, Vlamis describes an explicit CB-generating of the surfaces with finitely many ends accumulated by genus, i.e. with $p = 0$ in the notation below. This description coincides with the set given in this example without the half-twists $g'$.  As a starting point for the examples in this section, we recall and elaborate on this example now.  

\begin{EXA}[Finite end space] \label{exa:np_finite}

Let $S$ be a surface with $n$ ends accumulated by genus $\{x_1, \dots, x_n\}$, and $p$ punctures $\{y_1, \dots, y_p\}$ where $1 < n < \infty$  and $0 \le p < \infty$.  Let $K$ be a finite type subsurface containing all $p$ punctures of $S$, and with $n$ boundary components such that each connected component of $S \setminus K$ contains exactly one end accumulated by genus.  By \Cref{thm:locCB}, $\cV_K$ is CB since $S\setminus K$ partitions $E(S\setminus \Int(K))$ into a disjoint union of clopen sets $A_1,\ldots, A_n$ where each $A_i$ is the set of a single end accumulated by genus and is thus self-similar and satisfies Condition (2c) of \Cref{thm:locCB}. Let $D$ be a finite generating set for $\Map(K)$.  Note that within $D$ there are the half-twists $g'_{j, j+ 1}$ permuting the punctures $y_j$ and $y_{j + 1}$ for $j = 1, \dots p - 1$  If $p = 0$ or $1$, no $g'$ maps are included.  
Because there are only finitely many ends, all ends are isolated, and $\Map(S)$ admits no generalized shifts $F$. 
On the other hand, we now require $n -1$ pairwise commuting handle shifts $\{h_1, \dots, h_{n - 1}\}$ for which $h_i^+ = x_n$ and $h_i^- = x_i$.  
Lastly, let $g_{i, i +1}$ be half-twists that permute the ends $x_i$ and $x_{i + 1}$ accumulated by genus for $i = 1, \dots, n - 1$. This collection of half twists generates all permutations of the ends accumulated by genus.  See \Cref{fig:np_finite} for an example.  In summary, the CB-generating set for $\Map(S)$ is: 
\begin{equation*}
    \cV_K \cup D \cup \{h_1, \dots, h_{n - 1}\} \cup \{ g_{i, i + 1} \mid i = 1, \dots, n- 1\}.
\end{equation*}

\begin{figure}[ht!]
    \centering
    \includegraphics[width=.8\textwidth]{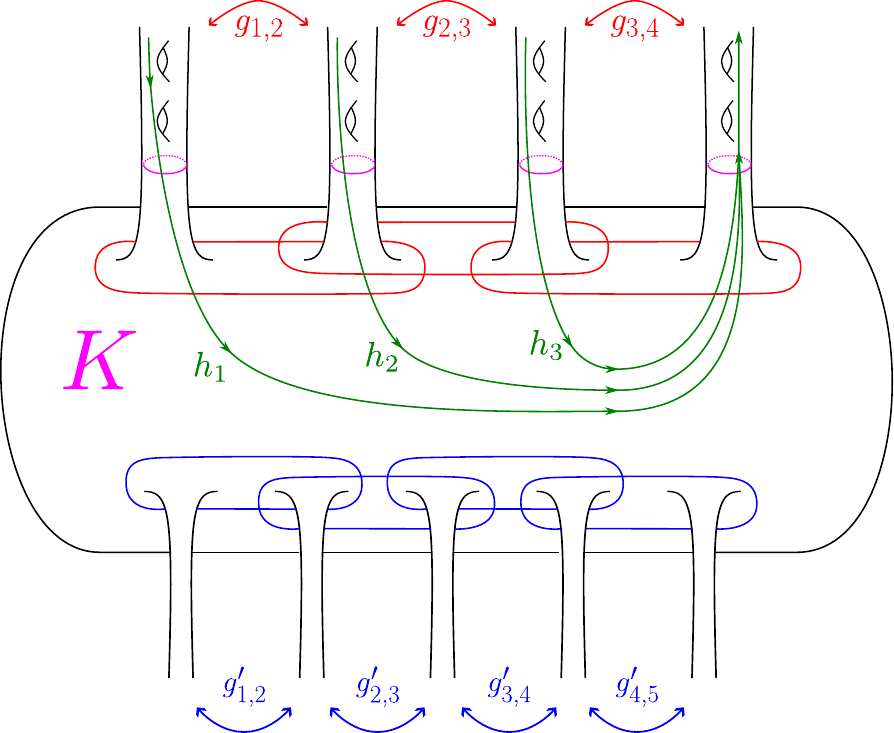}
    \caption{A surface with finitely many punctures and ends accumulated by genus illustrating \Cref{exa:np_finite}, with $n=4$ and $p=5$.  Note that the surface $K$ contains all punctures of $S$.  The maps $g'_{j, j+1}$ are contained in the finite generating set for $\Map(K)$.}
    \label{fig:np_finite}
\end{figure}

If $n = 1$, i.e., if $S$ is a punctured Loch Ness monster surface (see \Cref{fig:LNMonsterWithPunctures}), $\Map(S)$ is not locally CB and hence not CB generated.  Indeed, the small zoom condition \Cref{thm:locCB}(2c) fails.  To see why, let $V$ be a neighborhood of the end accumulated by genus.  Any homeomorphism $f \in \Map(S)$ will preserve the homeomorphism type of the disjoint subsurfaces $K$, $S \setminus (V \cup K)$, and $V$.   However, in general, $S \setminus (V \cup K)$ has non-zero genus, but $S\setminus (S \setminus K) = K$ does not.  So there is no $f \in \Map(S)$ such that $f(V) \supset S \setminus K$, and therefore $\Map(S)$ is not locally CB.  Consequently, it is not CB-generated either. 

\begin{figure}[ht!]
    \centering
    \begin{overpic}[width=5cm]{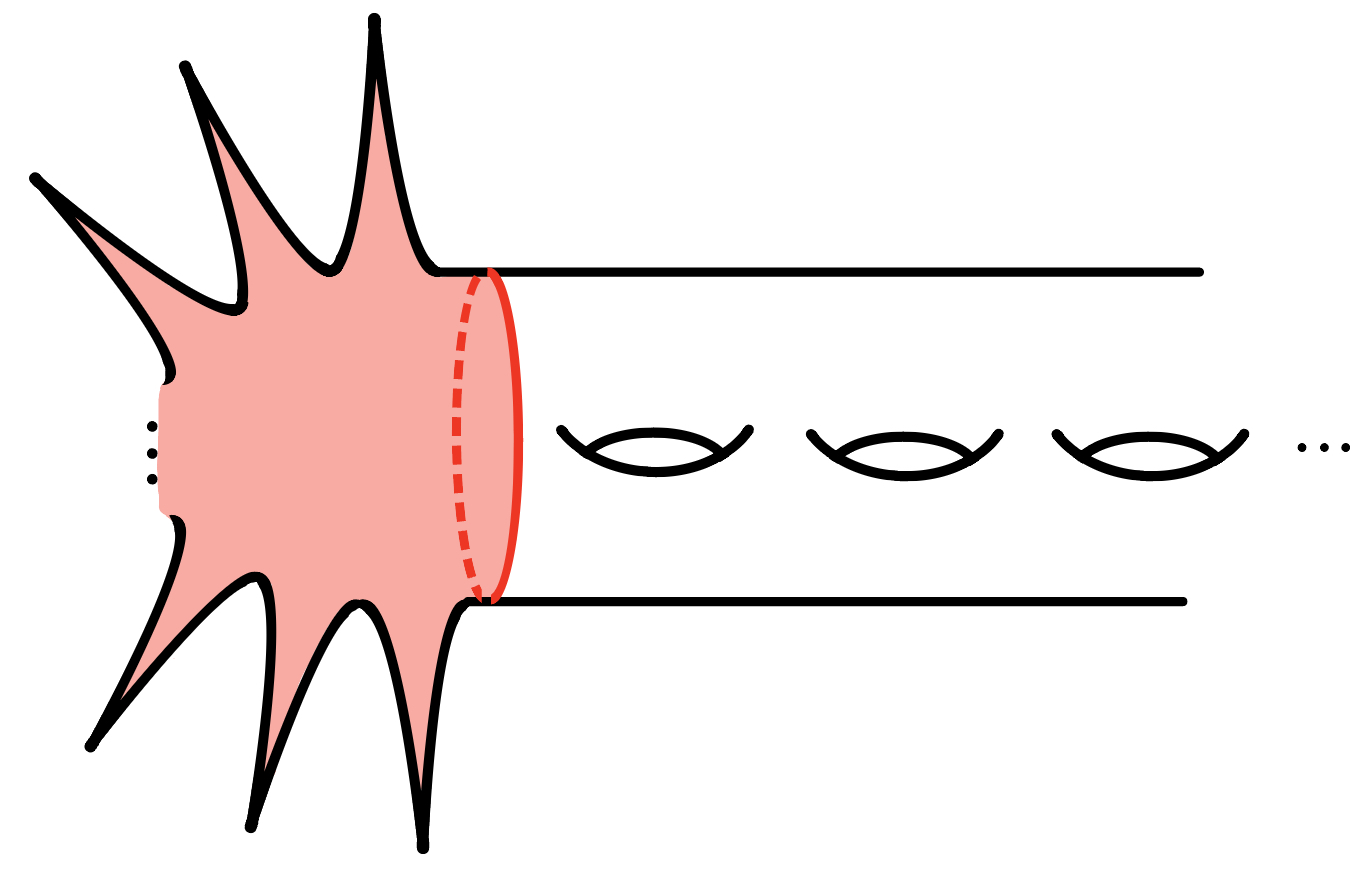}
    \put(20,30){\color{red} $K$}
    \end{overpic}
    \caption{The subsurface $K$ is non-displaceable and is also the unique subsurface satisfying the hypothesis of \Cref{thm:locCB}.}
    \label{fig:LNMonsterWithPunctures}
\end{figure}
\end{EXA}

\subsection{Connecting finiteness of $H$, $F$, and $T$ to \Cref{thm:CBgen}}

The examples in this section aim to show how attempts to modify a surface in a way that seems to require infinitely many generalized shifts result in a surface with end space of limit type or infinite rank mapping class group, i.e., failing the hypotheses of \Cref{thm:CBgen}.  

In \Cref{exa:np_finite}, the finiteness of the ends accumulated by genus implies that $H$ is finite. Similarly, since $S \setminus K$ has only finitely many complementary components each with the same end type, only finitely many half-twists are needed. For a surface with infinitely many ends accumulated by genus, this is clearly not the case. In this setting, the choice of $K$ in \Cref{thm:locCB} differs significantly, and generalized shifts become necessary. For example, consider the following modification to \Cref{exa:np_finite}, with $p = 0$ and infinitely many ends accumulated by genus:

\begin{EXA}[Chimney Surface]\label{exa:MinfPfin}
   Let $S$ be the infinite chimney surface; that is, the surface with end space $(E(S), E_G(S)) \cong (\omega^2 + 1, \omega^2 + 1)\sqcup(\omega^2 + 1, \omega^2 + 1)$, again following the notation of \Cref{exa:nontame_basic}.  A CB-generating set for this mapping class group will require a single ``chimney shift'', and the addition of handle shifts outside of $\cV_K$ is not necessary, as we explain below.  
    
    For completeness, we now describe a complete CB-generating set.  Label the maximal ends of $S$ as $e_1$ and $e_2$, and let $K$ be an annulus separating $e_1$ and $e_2$. 
    See \Cref{fig:MinfPfin}. 
\begin{figure}[ht!]
    \centering
    \includegraphics[width=.7\textwidth]{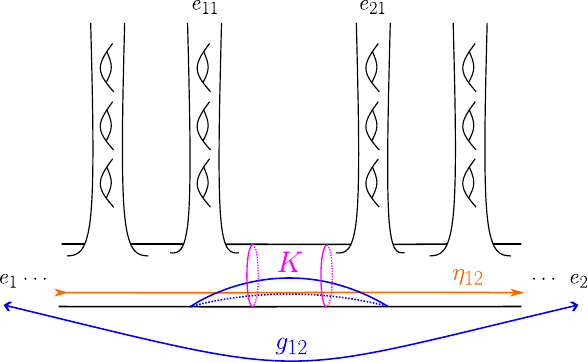}
    \caption{A surface illustrating \Cref{exa:MinfPfin}.}
    \label{fig:MinfPfin}
\end{figure}
    By \Cref{thm:locCB}, $\cV_K$ is CB. 
    Let $D$ contain the single Dehn twist generating $\Map(K)$.
    Then $F$ contains only the generalized shift $\eta_{12}$ between the two maximal ends $e_1$ and $e_2$, which shifts each end accumulated by genus to the right. 
    Note that it is not necessary to include a handle shift from $e_{11}$ to $e_{21}$ since it is conjugate (by $\eta_{12})$ to a handle shift in $\cV_K$. 
    In addition, let $g_{12}$ be a half-twist that swaps the ends $e_1$ and $e_2$. 
     
    We can permute the ends $e_{11}$ and $e_{21}$ by conjugating a permutation of ends in $\mathcal V_K$ by $\eta_{12}$. 
     In summary, the CB-generating set for $\Map(S)$ is
    \[
        \mathcal V_K\cup D \cup \{\eta_{12}, g_{12} \}.
    \]
    
\end{EXA}

\vspace{.5cm}

Increasing the complexity of the ends accumulated by genus from \Cref{exa:np_finite} to \Cref{exa:MinfPfin} in a way suggests that infinitely many handle shifts or half-twists would be required in the CB-generating set, highlighting a role of generalized shifts in the CB-generating set.   
In \Cref{exa:nmp_multgenshift} and \Cref{exa:infgenshift}, we apply a similar idea but now focus on increasing the number of higher-order shifts that appear to be required. In contrast, these modifications cause the surface to fail the hypotheses of \Cref{thm:CBgen}.
One might expect that modifying a surface in this way, so that infinitely many generalized shifts are effectively needed, would yield a mapping class group of infinite rank. While that can be the case (see \Cref{exa:kanagawa}), it is also possible that the resulting surface will have end space of limit type, as illustrated below in \Cref{exa:infgenshift}.

\begin{EXA}[Generalized shifts, $|F| = k$]
\label{exa:nmp_multgenshift}

    \begin{figure}[ht!]
    \centering
    \includegraphics[width=\textwidth]{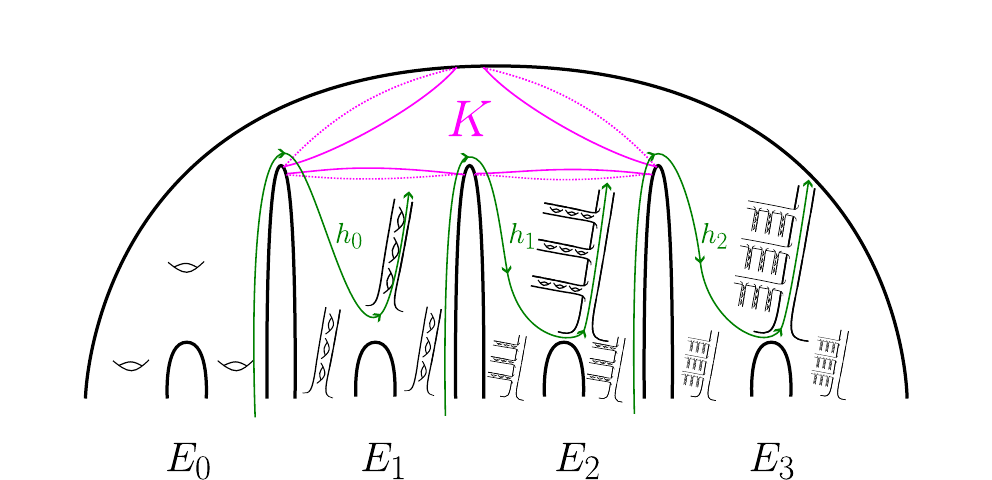}
    \caption{The surface $S$ described in \Cref{exa:nmp_multgenshift} when $k=4$.}
    \label{fig:nmp_multgenshift}
\end{figure}
 
     Consider the Cantor tree surface. Let $k$ be a positive integer and partition the end space into $k$
     disjoint subsets $E_0, E_1,E_2, \ldots, E_{k-1}$. Let $K$ be the subsurface
     with no genus and containing no ends that realizes the partition of the
     end space into the $E_i$. Call $T_0,T_1,T_2, \ldots, T_{k-1}$ the complementary components of $K$, with end space being $E_0,E_1,E_2, \ldots, E_{k-1}$ respectively. Assume $E_0$ is accumulated by genus, by periodically connect-summing handles in $T_0$ toward each end in $E_0$. Next, we assume every end of $E_1$ is accumulated by Loch Ness monster surfaces, by periodically connect-summing a sequence of Loch Ness monster surfaces in $T_1$ toward each end in $E_1$. More generally, for $i\geq 1$, call $L_i$ the infinite type surface with infinite genus, whose end space pair is $(E(S), E_G(S)) \cong (\omega^{i-1}+1,\omega^{i-1}+1)$. Then we assume every end of $E_i$ is accumulated by copies of $L_i$, by periodically connect-summing a sequence of $L_i$'s in $T_i$ to each end in $E_i$.  
     Call the resulting surface $S$, see \Cref{fig:nmp_multgenshift} for an illustration.

    We will now construct a CB-generating set for $\Map(S)$ following \Cref{sec:CBgenset}.
    Since each $E_i$ is self-similar, $\cV_K$ is CB by \Cref{thm:locCB}.
    Let $D$ be a finite collection of Dehn twists generating $\Map(K).$ 

    Next, we consider the various shift maps. 
    From the discussion in \Cref{sec:CBgenset}, we see that for each pair $E_i$ and $E_j$ with $i\neq j$ it suffices to add one shift of each possible type between some end in $E_i$ and some end in $E_j$ into our generating set. However, as in example \Cref{exa:MinfPfin}, lower-order shifts can be obtained as conjugates by elements in $\cV_K$.  Hence, we need only include a single $L_i$-shift between $E_i$ and $E_{i+1}$ for all $i \ge 1$, and for $i = 0$ a single handle shift.  Call these shifts $\{h_i\}_{i = 0}^{k-1}.$ 
    
    For each $i\neq j$, any end in $E_i$ and any end in $E_j$ are incomparable, so there are no half-twists permuting the ends between $E_i$ and $E_j$. Hence, 
    \begin{equation*}
        \cV_K \cup D \cup \{h_i\}_{i = 0}^{k - 1}
    \end{equation*}
    is a CB-generating set for $\Map(S)$.

\end{EXA}

\vspace{.5cm}

In \Cref{exa:nmp_multgenshift}, the final CB-generating set for $\Map(S)$ includes one handle shift $h_0$ and $k - 1$ generalized shifts $h_1,\ldots, h_{k-1}$. In the next example, we build on this construction by modifying the surface in a way that seems to suggest that infinitely many generalized shifts would be required in the generating set. Unlike the previous case, this surface has an end space of limit type and thus fails to have CB-generated mapping class group by \Cref{thm:CBgen}.

\begin{EXA}[Non-example: limit type and finite rank]
    \label{exa:infgenshift}
    Consider the Cantor tree surface. Partition the end space into countably many
     disjoint sets $E_0, E_1,E_2, \ldots$, each of which homeomorphic to the Cantor set. For example, when we realize the Cantor set as the set of infinite binary sequences $2^{\mathbb{N}}$, set $E_i$ as the set of sequences with exactly $i$-many $1$'s as their prefix. Note not all $E_i$ can be clopen as the end space of the whole surface is compact.
      Let $K$ be an \emph{infinite} type subsurface of genus 0, and not containing ends of $S$, that realizes this partition; i.e., $E(S\setminus \Int(K)) = \bigsqcup_i E_i$.  
     Call $T_0,T_1,T_2, \ldots$ the complementary components of $K$, with end space being $E_0,E_1,E_2, \ldots$ respectively.
     Assume $E_0$ is accumulated by genus, by periodically connect-summing handles in $T_0$ toward each end in $E_0$. Next, we assume every end of $E_1$ is accumulated by Loch Ness monster surfaces, by periodically connect-summing a sequence of Loch Ness monster surfaces in $T_1$ toward each end in $E_1$. For each $i\geq 2$, call $L_i$ the infinite type surface with infinite genus, whose end space pair is homeomorphic to $(\omega^{i-1}+1,\omega^{i-1}+1)$. Then we assume every end of $E_i$ is accumulated by copies of $L_i$, by periodically connect-summing a sequence of $L_i$'s in $T_i$ to each end in $E_i$.
     Call the resulting surface $S$ and see \Cref{fig:infax} for an illustration. 

\begin{figure}[ht!]
    \centering
    \begin{overpic}[width=\textwidth]{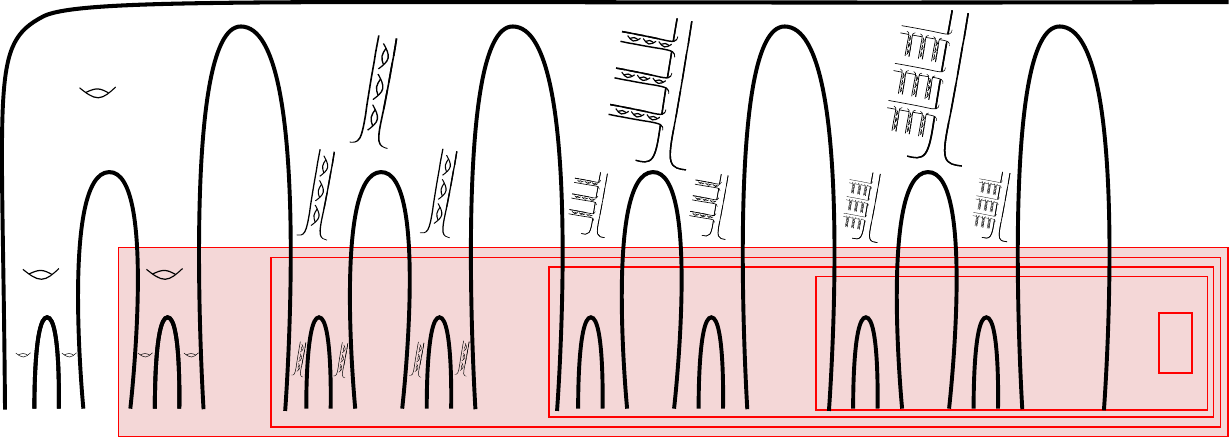}
    \put(6,0){\color{red}{$U_0$}}
    \put(18,1){\color{red}{$U_1$}}
    \put(41,2){\color{red}{$U_2$}}
    \put(63,3){\color{red}{$U_3$}}
    \put(94.4,6.5){\color{red}{$X$}}
    \put(85,7){\color{red}{$\cdots$}}
    \end{overpic}
    \caption{The surface $S$ described in \Cref{exa:infgenshift}. The neighborhoods $U_i$ illustrate that $E(S)$ is limit type.}
    \label{fig:infax}
\end{figure}

As in \Cref{exa:nmp_multgenshift}, each $E_i$ is self-similar. However, there is no finite-type subsurface $K \subset S$ that realizes the infinite partition $ E(S) = \bigsqcup_i E_i$, and so we cannot apply \Cref{thm:locCB} in the same way. In fact, the mapping class group $\Map(S)$ is not locally CB.

To see this, notice that any finite-type subsurface $K' \subset S$ such that $K'$ induces a finite clopen partition of $ E(S)$ into sets $A_1, \ldots, A_\ell$. One of these clopen sets must contain the distinguished maximal end $e$; without loss of generality, assume $e \in A_1$. Observe that $A_1$ is not self-similar and is not homeomorphic to any clopen subset of the other $A_i$. Hence, no such $K'$ satisfies the conditions of \Cref{thm:locCB}, and $\Map(S)$ is not locally CB. Consequently, $\Map(S)$ cannot be CB-generated.

Additionally, the end space of $S$ is of limit type, as illustrated by the neighborhoods $U_i$ in \Cref{fig:infax}, and the mapping class group $\Map(S)$ has finite rank since the various end types have only one accumulation point and hence fail \Cref{def:infiniterank}(3a).

\end{EXA}

\subsection{Non-tame examples}
So far, we have only considered surfaces with tame end space in this section, but we can also look at some surfaces that do not have tame end space and consider whether or not they are CB-generated. For these surfaces we have fewer tools at our disposal because we will not be able to use Mann--Rafi's CB-generating set.

\begin{EXA}[$D$ is not CB, but $D' := \sqcup_{\Z} D$ is CB]
\label{exa:nontame_CB}
  We saw that the surface $S$ with endspace $D$ in \Cref{exa:nontame_basic} is not tame.  
Still, we can apply \Cref{thm:CB} to show $\Map(S)$ is not CB, as $D$ is countable. Furthermore, we see by \Cref{thm:locCB} that $\Map(S)$ is not locally CB, which also implies $\Map(S)$ is not CB and not CB-generated by \Cref{prop:CBimplications}. 
  Because the ends $z_1,z_2,\ldots $ are all incomparable, $D$ is not self-similar. To see this, consider the partition of the end space that separates the ends of type $\omega+1$ from all the other ends in $D$ where $E_1$ contains all the ends of type $\omega+1$ and $E_2$ contains all the other ends in $D$. We would like $E_2$ to contain a homeomorphic copy of $D$.  However, $E_2$ has no ends of type $\omega+1$, so it cannot contain a homeomorphic copy of $D$.  Furthermore, there is no way for us to finitely partition $D$ in such a way that each component of the partition is self-similar, so $S$ has a non-locally CB mapping class group.  In particular, this also means that this mapping class group is not CB either.  

\begin{figure}[ht!]
        \centering
        \includegraphics[width=.6\textwidth]{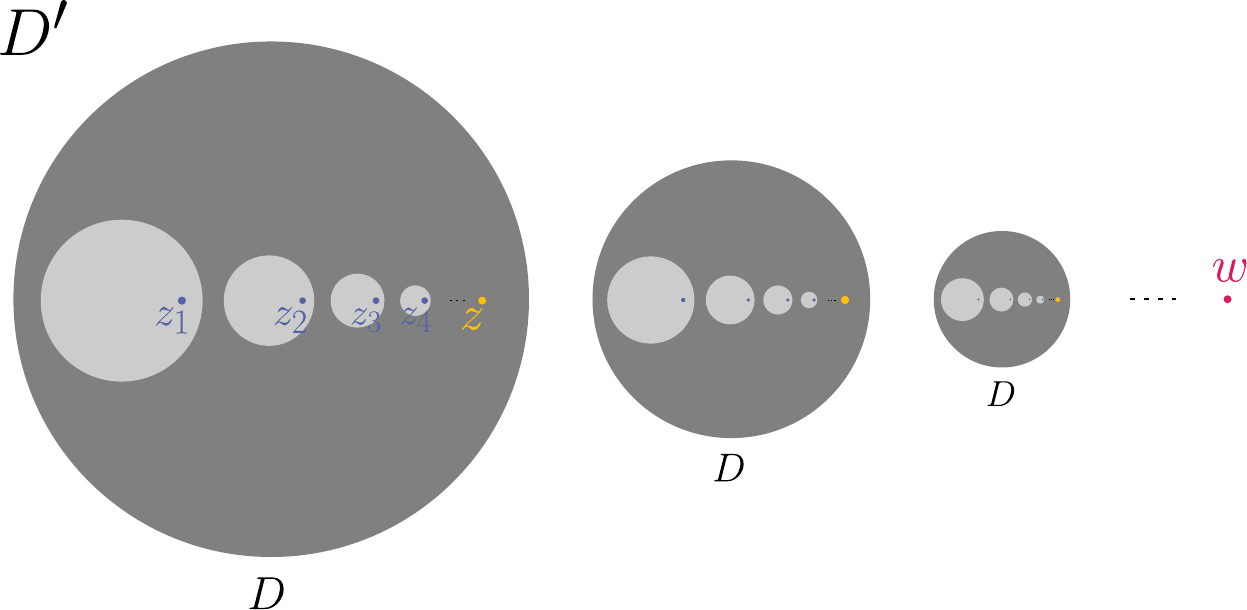}
        \caption{The end space $D'$ for \Cref{exa:nontame_CB}. The maximal end is $w$, and its immediate predecessors are the $z$'s in the copies of $D$ pictured in  \Cref{fig:nontame_nonlocCB}. 
        }
        \label{fig:nontame_CB}
    \end{figure}

Now, consider $D'=\bigsqcup_{n\in\mathbb Z}D$ arranged with a unique maximal end
$w$ as in \Cref{fig:nontame_CB}. Like $D$, $D'$ is not tame. However, $D'$ is countable and self-similar.  Indeed, given any partition of $D'$, the component of the partition containing $w$ will contain a homeomorphic copy of $D'$.  Therefore, the mapping class group of the surface with end space $D'$ is CB by \Cref{thm:CB}.

\end{EXA}

\begin{EXA}[Locally CB, non-tame, non-CB-generated]\label{exa:nontame_locCB}

    Consider a surface with end space $D'' := D' \sqcup D'$ depicted in \Cref{fig:nontame_locCB}.
    \begin{figure}[ht!]
    \centering
    \includegraphics[width=.7\textwidth]{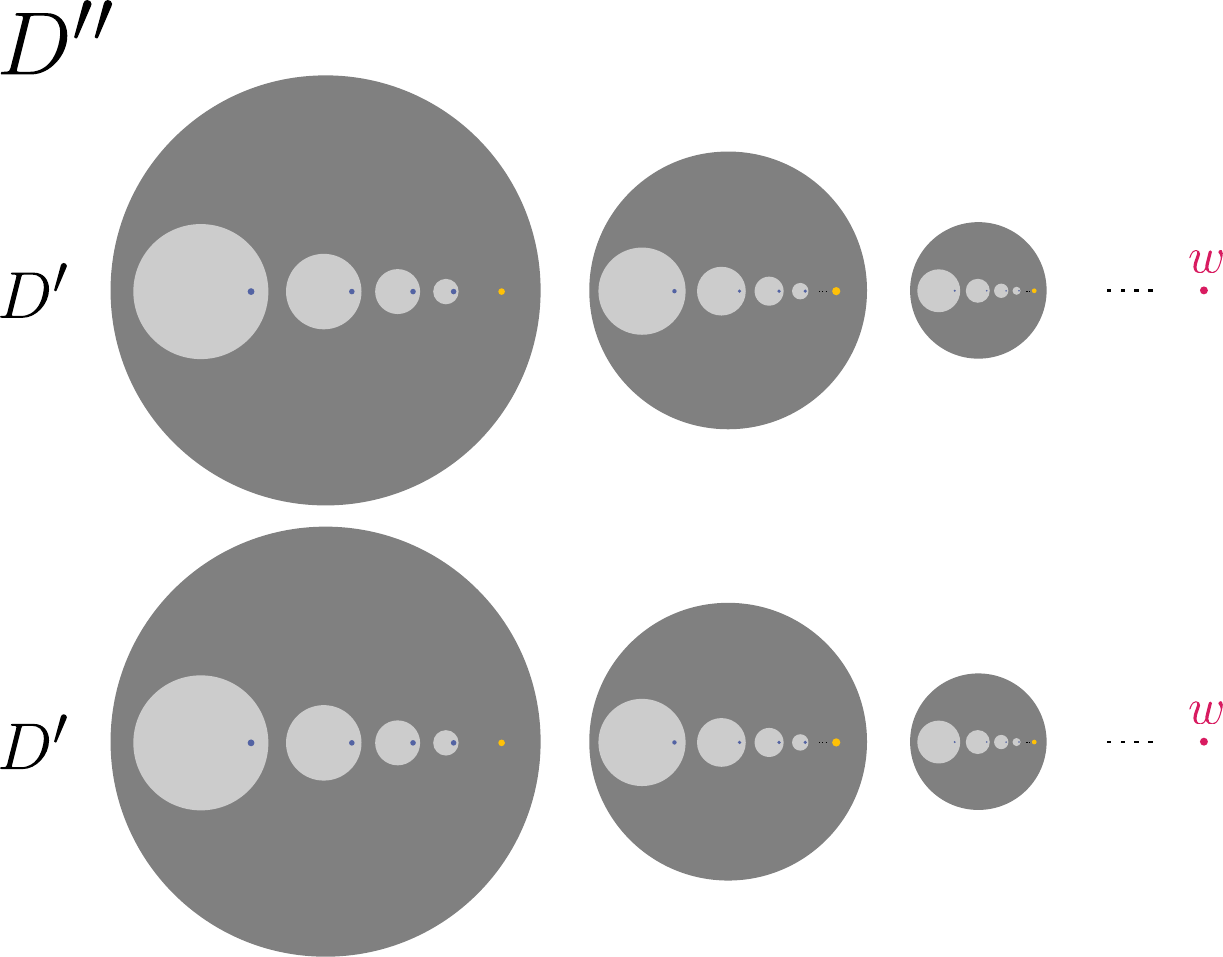}
    \caption{The end space $D'' := D' \sqcup D'$. It is still not tame as their immediate predecessors are not stable, as in \Cref{exa:nontame_CB}.  However, it follows that it realizes a surface with locally CB mapping class group, as the partition $D'' = D' \sqcup D'$ consists of only self-similar end spaces $D'$.}
    \label{fig:nontame_locCB}
\end{figure}
    Like $D'$ and $D$, $D''$ is not tame. However, unlike the mapping class group of a surface with end space $D'$, the mapping class group of a surface with end space $D''$ is locally CB, \emph{but not} CB since this countable end space is neither self-similar nor telescoping. To see local CB-ness, partition $D''$ into two components, each containing exactly one copy of $D'$.  Because the mapping class group of a surface with end space $D'$ is CB, by \Cref{thm:locCB}, the mapping class group of a surface with end space $D''$ is locally CB.  However, this mapping class group is not CB-generated since $D''$ is of limit type as is illustrated by \Cref{fig:nontame_locCB_2}. 
    
    \begin{figure}[ht!]
    \centering
    \includegraphics[width=.6\textwidth]{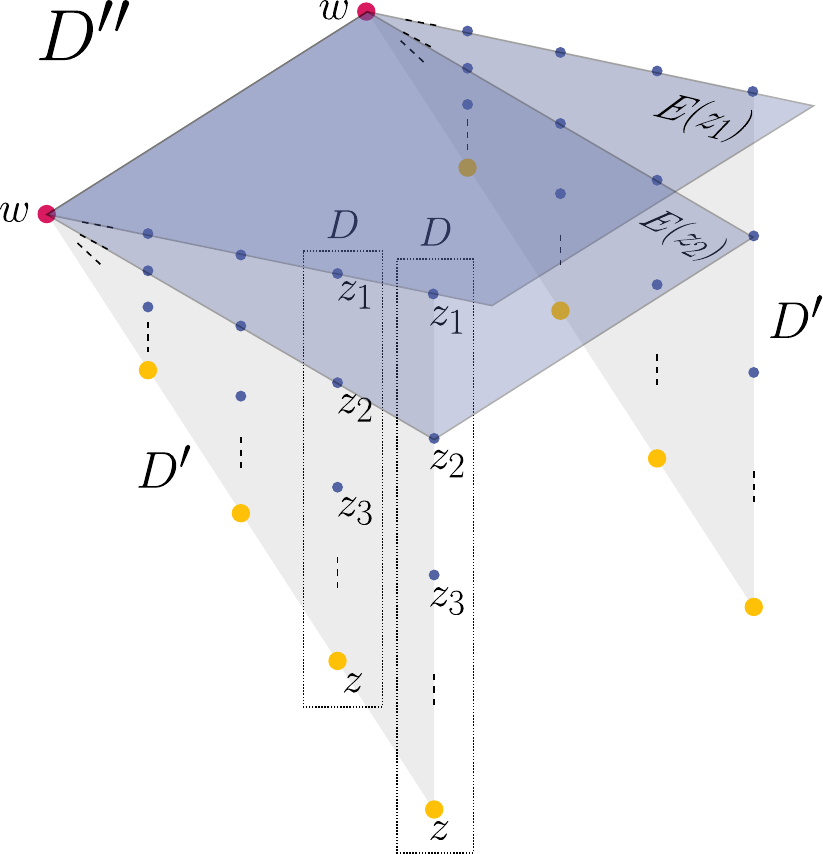}
    \caption{Another representation of $D''$ in \Cref{exa:nontame_locCB}. Each gray triangle represents a copy of $D'$, and each blue-shaded quadrilateral represents the equivalence class of $z_1$ and $z_2$, respectively.  By taking $X=\{w\}$ in one copy of $D'$ and $\{U_n\}_{n \ge 1}$ to be nested sets in the same copy of $D'$ shrinking to $X$, we show that $D''$ is of limit type.}
    \label{fig:nontame_locCB_2}
    
\end{figure}

\end{EXA}

Although the end space $D$ used in the examples above is countable, we could replace $D$ with an uncountable end space, provided it still has a unique maximal end and lacks any stable neighborhood. For such an end space, the same conclusions would hold, since the results of Jiménez-Rolland--Morales \cite{rolland2023large} apply and determine the (non-)CB-ness in an analogous way.

\subsection{Summary}
\label{subsec:examples}

\Cref{tab:guide} summarizes some important properties of the examples discussed in this paper.

\colorlet{clrY}{green!20}
\colorlet{clrN}{yellow!20}

\begin{table}[h!]
\renewcommand{\arraystretch}{1.2}
\makebox[\textwidth][c]{
\begin{tabularx}{1.023\textwidth}{>{\centering\arraybackslash}m{1.2cm}|m{4.2cm}| >{\centering\arraybackslash}m{1cm} | >{\centering\arraybackslash}m{1cm} | >{\centering\arraybackslash}m{1cm} | >{\centering\arraybackslash}m{1cm} | >{\centering\arraybackslash}m{1cm} }
 \Xhline{0.5ex}
 & {\bf Example} & {\bf CB} & {\bf loc. CB} & {\bf Tame} & {\bf Finite Rank} & {\bf Not Limit Type} \\ \hline
 \multirow{3}{*}{\centering\rotatebox{90}{\small{\textbf{CB-generated\,}}}} 
 & Loch Ness (\Cref{fig:lochandlad})        & Y\cellcolor{clrY} & Y\cellcolor{clrY} & Y\cellcolor{clrY} & Y\cellcolor{clrY} & Y\cellcolor{clrY} \\ \cline{2-7}
 &\Cref{exa:partialorder}, \Cref{exa:np_finite}, \Cref{exa:MinfPfin}, \Cref{exa:nmp_multgenshift}  & N\cellcolor{clrN} & Y\cellcolor{clrY} & Y\cellcolor{clrY} & Y\cellcolor{clrY} & Y\cellcolor{clrY} \\ \cline{2-7}
 & $D'$ (\Cref{exa:nontame_CB})                        & Y\cellcolor{clrY} & Y\cellcolor{clrY} & N\cellcolor{clrN} & Y\cellcolor{clrY} & Y\cellcolor{clrY} \\ \hline\hline
 \multirow{5}{*}{\centering\rotatebox{90}{\small{\textbf{not CB-gen'd}}}} 
 & Punctured Loch Ness (\Cref{fig:LNMonsterWithPunctures})        & N\cellcolor{clrN} & N\cellcolor{clrN} & Y\cellcolor{clrY} & Y\cellcolor{clrY} & Y\cellcolor{clrY} \\ \cline{2-7}
 & \Cref{exa:infgenshift}                    & N\cellcolor{clrN} & N\cellcolor{clrN} & Y\cellcolor{clrY} & N\cellcolor{clrN} & N\cellcolor{clrN} \\ \cline{2-7}
 & \Cref{exa:kanagawa}          & N\cellcolor{clrN} & Y\cellcolor{clrY} & Y\cellcolor{clrY} & N\cellcolor{clrN} & N\cellcolor{clrN} \\ \cline{2-7}
 & $D''$ (\Cref{exa:nontame_locCB})                    & N\cellcolor{clrN} & Y\cellcolor{clrY} & N\cellcolor{clrN} & N\cellcolor{clrN} & N\cellcolor{clrN} \\ \cline{2-7}
 & $D$ (\Cref{exa:nontame_basic})       & N\cellcolor{clrN} & N\cellcolor{clrN} & N\cellcolor{clrN} & Y\cellcolor{clrY} & Y\cellcolor{clrY} \\
 \Xhline{0.5ex}
\end{tabularx}
}
\caption{Summary of the features of the surfaces in all examples. The top portion of the table summarizes examples with CB-generated mapping class groups, while the bottom portion covers examples with non-CB-generated mapping class groups. The properties shown here are 1)CB mapping class group, 2)locally CB mapping class group, 3)tame end space, 4)finite rank mapping class group, and 5)the end space $E(S)$ is not of limit type. We marked Y(for Yes) if the surface has the corresponding property, and N(for No) otherwise.}
\label{tab:guide}
\end{table}
The examples listed in \Cref{tab:guide} are not exhaustive. In particular, the top portion of the table would fully capture all possible CB-generated examples if, in addition, we had an example of a surface $S$ satisfying the following properties: $\Map(S)$ is not CB but CB-generated(and hence locally CB); $S$ does \emph{not} have a tame end space; $\Map(S)$ has finite rank; $E(S)$ is not of limit type.  The authors of this article are unaware of such an example.  Based solely on the information and tools in \cite{mann2023large}, we cannol rule it out; however, from a discussion in Section 6.3, it follows:

\begin{PROP}[{cf. \cite[Section 6.3]{mann2023large}}]\label{thm:Bgenimpliesmaxendstab}
    Let $S$ be an infinite type surface such that $\Map(S)$ is CB-generated but not globally CB. Then every maximal end of $S$ is stable.
\end{PROP}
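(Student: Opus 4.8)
The plan is to reduce to the structural description of a locally CB mapping class group and then show that non-stability of a maximal end is incompatible with being CB-generated. Since $\Map(S)$ is CB-generated, it is locally CB by \Cref{prop:CBimplications}, so \Cref{thm:locCB} supplies a finite-type subsurface $K\subset S$ and a decomposition $E=\bigsqcup_{A\in\cA}A$ (using the convention $\cP=\emptyset$) in which each $A$ is self-similar, each complementary region has genus $0$ or $\infty$, $\cM(E)=\bigsqcup_{A\in\cA}\cM(A)$, and the small zoom condition 2(c) holds. Because stability is a local property of an end and every maximal end of $S$ lies in $\cM(A)$ for exactly one self-similar piece $A$, it suffices to prove that each $x\in\cM(A)$ is stable. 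I would also record, from the unconditional direction of \Cref{thm:CBgen}, that $E(S)$ is not of limit type and $\Map(S)$ has finite rank; these are the obstructions I intend to contradict.

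The solid core is the following clean case, which I would isolate as a lemma: if the self-similar piece $A$ has a \emph{unique} end $x$ of maximal type, then $x$ is stable. Indeed, given any clopen neighborhood $U'\subsetneq A$ of $x$, apply self-similarity to the partition $A=U'\sqcup(A\setminus U')$; one piece contains a homeomorphic copy $A^{*}\cong A$, and since any copy of $A$ must contain an end of maximal type, while $x$ is the only such end, we get $x\in A^{*}\subseteq U'$. Thus every neighborhood of $x$ contains a neighborhood of $x$ homeomorphic to the fixed model $A$, which exhibits a neighborhood basis by pairwise homeomorphic neighborhoods, i.e. stability. Crucially, this argument shows that the sole danger to stability is the presence of \emph{several} ends of maximal type in $A$ (possibly accumulating at $x$): in that case a copy of $A$ produced by the partition trick may be anchored at a different maximal end and need not contain $x$.

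To handle the general case I would argue by contradiction: assuming $x\in\cM(A)$ is not stable, I would show the homeomorphism type of neighborhoods of $x$ fails to stabilize, and then organize this failure into a witness for one of the forbidden configurations. The equivalent maximal ends near $x$ can be interchanged with $x$ by ambient homeomorphisms (half-twists), so if the only obstruction were such equivalent ends, self-similarity would still relocate the copies of $A$ to $x$ and keep $x$ stable; hence non-stability must come from ends of \emph{pairwise incomparable} types below $[x]$ appearing at each scale but not persisting as one shrinks toward $x$. Choosing a nested basis $U_0\supseteq U_1\supseteq\cdots$ of $x$ with $\bigcap_n U_n=\{x\}$ and selecting such ends $z_n$ at each scale, one arranges the $z_n$ to be pairwise nonequivalent, obtaining data $(\{U_n\},\{z_n\})$ matching the definition of limit type (with $X=\{x\}$ and $G$ the pointwise stabilizer), or, in the branch where the types accumulate both at $x$ and outside $U_0$, matching \Cref{def:infiniterank} for infinite rank. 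Either outcome contradicts CB-generation.

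The hypothesis that $\Map(S)$ is \emph{not} globally CB enters exactly at the point of realizing such a witness: it keeps us out of the self-similar/telescoping regime of \Cref{thm:CB}, where a maximal end may genuinely fail to be stable, and it provides the exterior data — material outside a neighborhood of $x$, whether additional end types or the genus carried by $K$ — needed to satisfy the straddling requirements ($E(z_n)\cap U_0^{c}\neq\emptyset$ for limit type, or two accumulation points for infinite rank). The main obstacle, and the step I expect to require the most care, is precisely this bookkeeping: verifying that the ends extracted from the non-stabilizing neighborhoods can be organized into pairwise nonequivalent $z_n$ with the exact accumulation behavior demanded by the definitions, while ruling out the benign alternative in which every accumulating end shares the single maximal type $[x]$ (which, by the lemma together with the half-twist relocation, would instead keep $x$ stable).
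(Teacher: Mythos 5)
Your reduction to \Cref{thm:locCB} is the right first move, and your lemma for the case where the self-similar piece $A$ has a unique maximal end is correct. But the rest of the argument has a genuine gap: for the case where $A$ contains several (possibly accumulating) ends of maximal type, you only sketch a strategy --- extract pairwise nonequivalent ends $z_n$ from non-stabilizing neighborhoods and package them as a witness to limit type or infinite rank --- and you yourself flag the required ``bookkeeping'' as the step you cannot yet carry out. That bookkeeping is not routine: nothing in your setup guarantees that the failure of neighborhoods of $x$ to stabilize is caused by pairwise \emph{nonequivalent} end classes with the precise straddling/accumulation behavior that the definitions of limit type and infinite rank demand, nor that the relevant set $X$ can be taken to be $G$-invariant for a finite-index $G$. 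As written, the general case is not proved. There is also a smaller omission: the partition in \Cref{thm:locCB} is of $E(S\setminus\Int(K))$, so a maximal end of $S$ that is a puncture contained in $K$ is not covered by your claim that every maximal end lies in some $\cM(A)$; this case must be treated separately (such an end is an isolated planar end and is trivially stable).

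The irony is that the missing idea is already in your first paragraph: condition 2(c) of \Cref{thm:locCB}, the small zoom condition, which you quote and then never use. It closes the general case in one line and makes the entire contradiction apparatus (and the case split on how many maximal ends $A$ has) unnecessary. If $x\in\cM(A)$ and $N$ is the component of $S\setminus K$ with $E(N)=A$, then $N$ is a neighborhood of $x$, and for any smaller neighborhood $V\subset N$ of $x$ condition 2(c) provides $f_V\in\Homeo(S)$ with $f_V(V)\supset N$; hence $f_V^{-1}(N)\subseteq V$ is a homeomorphic copy of $N$ inside $V$. That is exactly the definition of $N$ being a stable neighborhood, so $x$ is stable --- with no appeal to \Cref{thm:CBgen}, to limit type, or to infinite rank, and no need to rule out your ``dangerous'' configuration of several maximal ends in $A$. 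This is the paper's argument, and it is where the structure of the locally CB criterion is actually doing the work.
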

\begin{proof}
    By \Cref{prop:CBimplications}, $\Map(S)$ is locally CB. Then \Cref{thm:locCB} tells us that there exists a finite type subsurface $K\subset S$ which fits the criteria of the theorem. In particular, $K$ partitions $E(S\setminus \Int(K))=\left(\bigsqcup_{A\in\mathcal{A}}A\right)\sqcup\left(\bigsqcup_{P\in\mathcal{P}}P\right)$ so that for each $A\in\mathcal A$, $\mathcal M(A)\subset\mathcal M(E(S\setminus \Int(K)))$ and $\mathcal M(E(S\setminus \Int(K)))=\bigsqcup_{A\in\mathcal A}\mathcal M(A)$. However, $K$ can only contain finitely many isolated planar ends. 
    
    In particular, if $x\in E(K)$, then $x$ is an isolated planar end. Further, since $x\in \mathcal M(E(S))$, $x$ is the only isolated planar end in $E(S)$. Pick any neighborhood of $x$ which is homeomorphic to a disk with a puncture. This neighborhood is stable, so $x$ is a stable end.

    If $x\notin E(K)$, then $x\in \mathcal M(E(S\setminus \Int(K)))$, so there exists some $A\in \mathcal A$ such that $x\in \mathcal M(A)$. Let $N\subset S$ be the connected component of $S\setminus K$ such that $E(N)=A$. Then $N$ is a neighborhood of $x$. Let $V\subset N$ be a neighborhood of $x$. By Condition (2c) of \Cref{thm:locCB}, there exists $f_V\in\Homeo(S)$ such that $N\subset f_V(V)$, so $V$ contains a homeomorphic copy of $N$. Thus, $N$ is stable and $x$ is a stable end.
\end{proof}

\begin{COR}\label{cor:exampletolookfor}
Let $S$ be a surface whose end space is not tame and not of limit type. Assume $\Map(S)$ has finite rank, is not CB, but is CB-generated. Then $S$ has a non-stable immediate predecessor of a maximal end.
\end{COR}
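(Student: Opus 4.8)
The plan is to reduce the statement to a short logical deduction built on \Cref{thm:Bgenimpliesmaxendstab} together with the definition of tameness. Recall that $E(S)$ is tame precisely when both (1) every maximal end is stable and (2) every immediate predecessor of a maximal end is stable. The strategy is to show that the hypotheses force condition (1) to hold, so that the failure of tameness must be witnessed by the failure of condition (2).

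First I would apply \Cref{thm:Bgenimpliesmaxendstab}. Since $\Map(S)$ is CB-generated but not globally CB by hypothesis, that proposition guarantees that every maximal end of $S$ is stable. This is exactly condition (1) in the definition of tameness, so (1) holds for $S$.

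Next I would negate the definition of tameness. As $E(S)$ is assumed not tame, at least one of conditions (1) and (2) must fail. The previous step rules out the failure of (1), so condition (2) must fail, meaning there is an immediate predecessor of a maximal end that is not stable. This is precisely the desired conclusion.

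There is no substantive obstacle here; the only point requiring care is the correct negation of the conjunction defining tameness. The remaining hypotheses---that $\Map(S)$ has finite rank and that $E(S)$ is not of limit type---are not used in the argument. They appear in the statement only because the corollary is meant to describe the profile of the hypothetical surface discussed in the preceding paragraph (one that is CB-generated, not CB, non-tame, of finite rank, and not of limit type); the conclusion about a non-stable immediate predecessor follows from non-tameness together with CB-generation (and the failure of global CB-ness) alone.
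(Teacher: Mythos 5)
Your argument is correct and is exactly the intended one: the paper states \Cref{cor:exampletolookfor} without proof as an immediate consequence of \Cref{thm:Bgenimpliesmaxendstab}, and your deduction (maximal ends are stable by that proposition, so non-tameness must be witnessed by a non-stable immediate predecessor) is the same reasoning. Your observation that the finite-rank and non-limit-type hypotheses are not used in the deduction, but merely describe the profile of the sought-after example, is also accurate.
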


The bottom half of \Cref{tab:guide} is also incomplete. 
In addition to the examples so far discussed, we added the following surface as an example in the table:

\begin{EXA}[Tame, locally CB, not CB-generated]\label{exa:kanagawa}

Let $S'$ be a surface with endspace $(E(S'), E_G(S')) \cong (\omega^\omega + 1, \emptyset)$. The surface $S'$ can be visualized as ``half'' of the surface in \Cref{fig:kanagawa} (see the caption for an explanation).  Because $E(S')$ is self-similar, $\Map(S')$ is globally CB by \Cref{thm:CB}.  By \Cref{thm:locCB}(2) taking connect-sums of self-similar surfaces results in a surface that is locally CB.  Indeed, for the surface $S = S' \# S'$ as illustrated in \Cref{fig:kanagawa}, $\Map(S)$ is locally CB.  To see this, let $K \subset S$ be an annulus in the center of \Cref{fig:kanagawa} between the two ends locally homeomorphic to $\omega + 1$.   The end spaces of the resulting components of $\Int(S \setminus K)$ are both self-similar.  There are no $P$ sets.  Additionally, the small zoom condition of \Cref{thm:locCB} is satisfied, so $\Map(S)$ is locally CB. 
\begin{figure}[ht!]
    \centering
    \scalebox{.94}{
    \begin{overpic}[width=\textwidth]{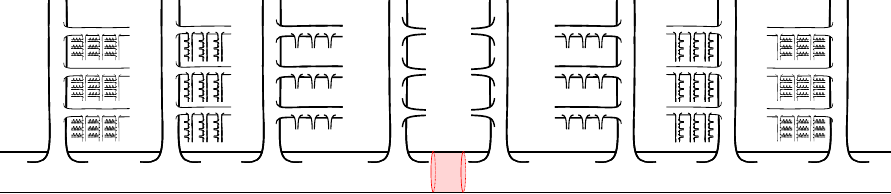}
        \put(-6,1.5){$e_1 \cdots$}
        \put(99,1.5){$\cdots e_2$}
        \put(48.9,1.5){\small\color{red}$K$}
    \end{overpic}
    }
    \caption{\Cref{exa:kanagawa}. This surface
    is tame and has locally CB but not CB-generated mapping class group. $S'$ is a single component of $S\setminus K$ with boundary capped off by a disk. }
    \label{fig:kanagawa}
\end{figure} 
Interestingly, the surface is also tame, but \emph{not} CB-generated.  The two ends $e_1$ and $e_2$ are maximal and stable since all sufficiently
  small neighborhoods of them are homeomorphic to one
another.  Furthermore, since $S$ has ends homeomorphic to $\omega^n + 1$ for all
positive $n$, neither $e_1$ nor $e_2$ has any immediate predecessors.  Therefore, $S$ is tame.  
$\Map(S)$ is not CB-generated since $S$ has infinite rank, in particular $E(z_n) = \omega^n + 1$ for all $n$ each with accumulating points $e_1$ and $e_2$. 

\end{EXA}

\vspace{.5cm}

Of the 16 possible combinations of Y/N in the bottom section of \Cref{tab:guide}, we have provided examples of only 5.  Some combinations are not possible.  For example, there is no (N,Y,Y,Y,Y) row; a surface which has tame end space, has end space not of limit type, has finite rank mapping class group, but whose mapping class group is locally CB but not CB-generated or globally CB by \Cref{thm:CBgen}.  Examples or non-existence of such examples in the remaining 10 cases are not known to the authors.

\bibliographystyle{plain}
\bibliography{bib}

\begin{thebibliography}{10}

\bibitem{aramayona2020big}
Javier Aramayona and Nicholas~G. Vlamis.
\newblock Big mapping class groups: an overview.
\newblock In {\em In the tradition of {T}hurston---geometry and topology},
  pages 459--496. Springer, Cham, [2020] \copyright 2020.

\bibitem{branman2024graphical}
Beth Branman, George Domat, Hannah Hoganson, and Robert~Alonzo Lyman.
\newblock Graphical models for topological groups: A case study on countable
  stone spaces.
\newblock {\em arXiv preprint arXiv:2411.15337}, 2024.

\bibitem{dehn1938gruppe}
Max Dehn.
\newblock Die gruppe der abbildungsklassen: Das arithmetische feld auf
  fl{\"a}chen.
\newblock {\em Acta mathematica}, 69(1):135--206, 1938.

\bibitem{Domat_2023}
George Domat, Hannah Hoganson, and Sanghoon Kwak.
\newblock Coarse geometry of pure mapping class groups of infinite graphs.
\newblock {\em Advances in Mathematics}, 413:108836, January 2023.

\bibitem{domat2023generatingsetsalgebraicproperties}
George Domat, Hannah Hoganson, and Sanghoon Kwak.
\newblock Generating sets and algebraic properties of pure mapping class groups
  of infinite graphs.
\newblock {\em arXiv preprint arXiv:2309.07885}, 2023.
\newblock To appear in Annales Henri Lebesgue.

\bibitem{farb2011primer}
Benson Farb and Dan Margalit.
\newblock {\em A primer on mapping class groups}, volume~49.
\newblock Princeton university press, 2011.

\bibitem{hill2023largescale}
Thomas Hill.
\newblock Large-scale geometry of pure mapping class groups of infinite-type
  surfaces.
\newblock {\em Proceedings of the American Mathematical Society},
  153(06):2667--2680, 2025.

\bibitem{kerekjarto1923vorlesungen}
B~v Ker{\'e}kj{\'a}rt{\'o}.
\newblock Vorlesungen {\"u}ber topologie: I, fl{\"a}chentopologie.
\newblock 1923.

\bibitem{lickorish1964finite}
William~BR Lickorish.
\newblock A finite set of generators for the homeotopy group of a 2-manifold.
\newblock In {\em Mathematical Proceedings of the Cambridge Philosophical
  Society}, volume~60, pages 769--778. Cambridge University Press, 1964.

\bibitem{mann2023large}
Kathryn Mann and Kasra Rafi.
\newblock Large-scale geometry of big mapping class groups.
\newblock {\em Geom. Topol.}, 27(6):2237--2296, 2023.

\bibitem{mann2022results}
Kathryn Mann and Kasra Rafi.
\newblock Two results on end spaces of infinite type surfaces.
\newblock {\em Michigan Mathematical Journal}, 74(5):1109--1116, 2024.

\bibitem{mazurkiewicz1920contribution}
Stefan Mazurkiewicz and Wac{\l}aw Sierpi{\'n}ski.
\newblock Contribution {\`a} la topologie des ensembles d{\'e}nombrables.
\newblock {\em Fundamenta Mathematicae}, 1(1):17--27, 1920.

\bibitem{patel2018algebraic}
Priyam Patel and Nicholas Vlamis.
\newblock Algebraic and topological properties of big mapping class groups.
\newblock {\em Algebraic \& Geometric Topology}, 18(7):4109--4142, 2018.

\bibitem{Richards1963OnTC}
Ian Richards.
\newblock On the classification of noncompact surfaces.
\newblock {\em Trans. Amer. Math. Soc.}, 106:259--269, 1963.

\bibitem{rolland2023large}
Rita~Jiménez Rolland and Israel Morales.
\newblock On the large scale geometry of big mapping class groups of surfaces
  with a unique maximal end.
\newblock {\em arXiv preprint arXiv:2309.05820}, 2023.

\bibitem{rosendal2013global}
Christian Rosendal.
\newblock Global and local boundedness of polish groups.
\newblock {\em Indiana University Mathematics Journal}, pages 1621--1678, 2013.

\bibitem{rosendal2014largescalegeometrymetrisable}
Christian Rosendal.
\newblock Large scale geometry of metrisable groups.
\newblock {\em arXiv preprint arXiv:1403.3106}, 2014.

\bibitem{rosendal2021coarse}
Christian Rosendal.
\newblock {\em Coarse geometry of topological groups}, volume 223 of {\em
  Cambridge Tracts in Mathematics}.
\newblock Cambridge University Press, Cambridge, 2022.

\bibitem{udall2024spherecomplexlocallyfinite}
Brian Udall.
\newblock The sphere complex of a locally finite graph.
\newblock {\em arXiv preprint arXiv:2407.07976}, 2024.

\bibitem{vlamis2019notes}
Nicholas~G Vlamis.
\newblock Notes on the topology of mapping class groups.
\newblock {\em preprint}, 2019.
\newblock \url{https://vlamis.nyc/assets/files/AIM_Notes.pdf}.

\end{thebibliography}

\end{document}